\newtheorem{Def}{Definition}
\newtheorem{thm}{Theorem}
\newtheorem{lem}[thm]{Lemma}
\newtheorem{cor}[thm]{Corollary}
\newtheorem{Rem}{Remark}
\newcommand{\cH}{{\cal H}}
\newcommand{\cQ}{{\cal Q}}
\newcommand{\ZZ}{\mathbb{Z}}
\newcommand{\RR}{\mathbb{R}}
\newcommand{\NN}{\mathbb{N}}
\newcommand{\CC}{\mathbb{C}}
\newcommand{\Ab}{{\boldsymbol{A}}}
\newcommand{\Bb}{{\boldsymbol{B}}}
\newcommand{\Cb}{{\boldsymbol{C}}}
\newcommand{\Db}{{\boldsymbol{D}}}
\newcommand{\Fb}{{\boldsymbol{F}}}
\newcommand{\Gb}{{\boldsymbol{G}}}
\newcommand{\Hb}{{\boldsymbol{H}}}
\newcommand{\Ib}{{\boldsymbol{I}}}
\newcommand{\Mb}{{\boldsymbol{M}}}
\newcommand{\Nb}{{\boldsymbol{N}}}
\newcommand{\Qb}{{\boldsymbol{Q}}}
\newcommand{\Rb}{{\boldsymbol{R}}}
\newcommand{\Tb}{{\boldsymbol{T}}}
\newcommand{\Vb}{{\boldsymbol{V}}}
\newcommand{\Wb}{{\boldsymbol{W}}}
\newcommand{\Xb}{{\boldsymbol{X}}}
\newcommand{\Yb}{{\boldsymbol{Y}}}
\newcommand{\Zb}{{\boldsymbol{Z}}}
\newcommand{\cb}{{\boldsymbol{c}}}
\newcommand{\db}{{\boldsymbol{d}}}
\newcommand{\eb}{{\boldsymbol{e}}}
\newcommand{\gb}{{\boldsymbol{g}}}
\newcommand{\vb}{{\boldsymbol{v}}}
\newcommand{\bPhi}{{\boldsymbol{\Phi}}}
\newcommand\bZero{{\boldsymbol 0}}
\newcommand{\diag}{{\mathop{\mbox{\rm diag}\,}}}
\newcommand{\supp}{{\mathop{\mbox{\rm supp}\,}}}
\begin{document}

\title{Level-dependent interpolatory Hermite subdivision schemes and wavelets}

\author{
  {Mariantonia Cotronei}\thanks{DIIES, Universit\`a Mediterranea di
    Reggio Calabria, Via Graziella loc. Feo di Vito, 89122 Reggio
    Calabria, Italy. \texttt{mariantonia.cotronei@unirc.it}}  
\and 
{Caroline Moosm\"uller}\thanks{Department of Chemical and Biomolecular
  Engineering, Johns Hopkins University, 3400 North Charles Street,
  Baltimore, MD 21218, USA. \texttt{cmoosmueller@jhu.edu}} 
\and 
{Tomas Sauer}\thanks{Lehrstuhl f\"ur Mathematik mit Schwerpunkt Digitale
  Signalverarbeitung \& FORWISS, Universit\"at Passau,
  Fraunhofer IIS Research Group on Knowledge Based Image
  Processing, Innstr.~43,
  94032 Passau, Germany. \texttt{tomas.sauer@uni-passau.de}}
\and
{Nada Sissouno}\thanks{Department of Mathematics, Technical University
  of Munich, Boltzmannstra\ss e 3, 85748 Garching,
  Germany. \texttt{sissouno@ma.tum.de}}
}

\maketitle
\begin{abstract}
  We study many properties of level-dependent Hermite
  subdivision, focusing on schemes preserving polynomial and exponential
  data. We specifically consider interpolatory schemes, which give rise
  to level-dependent multiresolution analyses through a
  prediction-correction approach. A result on the decay of the
  associated multiwavelet coefficients, corresponding to an uniformly
  continuous and differentiable function, is derived. It makes use of
  the approximation of any such function with a generalized Taylor
  formula expressed in terms of polynomials and exponentials. 
  \par\smallskip\noindent
  {\bf Keywords:} subdivision schemes; Hermite schemes; wavelets; coefficient decay
  \par\smallskip\noindent
  {\bf MSC:} 65T60;  65D15;  41A58
  
\end{abstract}

\section{Introduction}
Hermite subdivision schemes are iterative procedures that allow,
through a refinement process, to generate curves 
from a given set of discrete data points, consisting of function and derivative
values. Such data naturally occurs in applications of motion control
where position, velocity, acceleration and even higher derivatives of
the motion are computed on a discrete grid,
cf. \cite{hoffmann06:_method_and_devic_for_guidin}, and have then to
be interpolated by the Numerical Control when the motion is actually
performed.

The polynomial reproduction property of such schemes has been
thoroughly investigated in the last years, cf.
\cite{merrien14,dubuc06,dubuc05,dyn95,han05}. As in standard (non-Hermite)
schemes, such a property is crucial for assuring  not only the
convergence of the scheme, but also the smoothness and the
approximation order of its limit function \cite{conti16b,dyn90,dyn02}.  

Recently, some research has focused both on standard
\cite{conti11a,dyn03} and Hermite \cite{conti16,conti17a,conti15,uhlmann}
schemes preserving not only polynomial, but also exponential data,
that is, sequences of the form $\left( e^{\lambda k} : k \in \ZZ
\right)$. This generalization allows the generation of curves which
also exhibit transcendental features. Such schemes necessarily have a
level-dependent nature, which means that the subdivision operator
varies at each step of the subdivision process.  

The reproduction property of level-dependent schemes has been explored
from the point of view of wavelet analysis \cite{dyn14,vonesch07}, where it translates into a 
vanishing moment property for both exponentials and polynomials.
Using the strong connection between wavelets and subdivision schemes,
\cite{cotronei17} proposes a construction of Hermite multiwavelets and
corresponding multiresolution analysis (MRA) with polynomial and
exponential vanishing moments. This construction is based on the
interpolatory Hermite schemes possessing the polynomial and
exponential preservation property introduced in \cite{conti16}. 

We remark that the vanishing moment property is very desirable in
wavelet analysis, as it ensures compression capabilities of the
wavelet system as long as the processed data contains many ``small''
negligible details and is of a certain smoothness otherwise.
How small such details are depends on the decay of the wavelet
coefficients and indeed, the decay rate can serve as a measure of
smoothness of the underlying function.
 
The aim of this paper is threefold. We start by providing some basic
results on level-dependent Hermite schemes 
in Sections~\ref{sec:preliminaries} and \ref{sec:MRA_Hermite}.
The main focus is on the properties of the limit functions of such schemes, which are the building blocks of
the corresponding level-dependent MRA. 

If such schemes possess the property of reproducing polynomial and
exponential data, then the wavelet coefficients satisfy a generalized
vanishing moment condition which is important for assuring sparse
representations of any
function $f\in C^d(\RR)$. This property is due to a certain decay rate
of the wavelet coefficients as the scale increases.
In order to prove such a decay, usually a Taylor expansion is used. We
thus introduce and analyze in the second part of this paper, Section~\ref{sec:taylor},
a generalized Taylor formula, which expands a given function using
polynomials and exponentials. We also compare this generalization to
the classical Taylor formula and derive an error bound between the two.

With this result at hand, we are able to
determine the decay of the wavelet
coefficients connected to the MRA generated by level-dependent
\emph{interpolatory} Hermite subdivision procedure. 
This constitutes the third and final part of this paper, namely
Section~\ref{sec:decay}.

Our analysis is tailored for \emph{interpolatory} Hermite schemes from
which it is very easy and natural to construct wavelet systems that
depend only on point evaluations by
means of prediction-correction methods. Moreover, from the viewpoint
of extending our results also to manifold-valued data, this is a
reasonable approach:
In \cite{grohs12} it is shown that in general manifold-valued
analogues of even scalar, non-Hermite wavelets are not possible any more, while
perfect reconstruction, stability and wavelet coefficient decay
results can still be achieved for many interpolatory examples, see
\cite{grohs09,grohs12}. We do expect that our result on the decay of
Hermite multiwavelet coefficients (Theorem~\ref{T:leveldep_decay}) 
can be transferred to the manifold setting by combining it with
results from \cite{grohs09} and \cite{moosmueller16,moosmueller17}.
 
\section{Preliminaries and first results}\label{sec:preliminaries}
We start by fixing the notation and introducing some basics concepts.
Vectors and matrices in $\RR^{d+1}$ and in $\RR^{(d+1) \times (d+1)}$
are denoted by boldface lower and upper case letters, respectively. If
the particular coordinates are of interest, a column vector $\vb$ is
also written as $\vb=\left[v_j \right]_{j=0}^d$. 
For the canonical basis in $\RR^{d+1}$ we write $\eb_j$,
$j=0,\ldots,d$. 
On $\RR^{d+1}$ we use both the infinity-norm $| \cdot |_{\infty}$ and
the Euclidean norm $| \cdot |_{2}$, while for matrices in $\RR^{(d+1)
  \times (d+1)}$ we use the operator norms, induced by the respective
norms on $\RR^{d+1}$ and again denote them by $| \cdot |_{\infty}$ and
$| \cdot |_{2}$. 

The space of all vector-valued sequences $\cb=(\cb_j: j\in \ZZ)$ is
written as $\ell(\ZZ)^{d+1}$,
and $\ell (\ZZ)^{(d+1) \times (d+1)}$ stands for the space of all
matrix-valued sequences $\Ab=(\Ab_j: j\in \ZZ)$.
By $\ell_{\infty}(\ZZ)^{d+1}$ we denote all bounded vector-valued
sequences, i.e., sequences $\cb \in \ell(\ZZ)^{d+1}$ with
\begin{equation*}
\|\cb\|_{\infty} := \sup_{j \in \ZZ}|\cb_j|_{\infty} < \infty.
\end{equation*}
Similarly, $\ell_{\infty}(\ZZ)^{(d+1) \times (d+1)}$
is the space of matrix-valued sequences where
\begin{equation*}
\|\Ab \|_{\infty} := \sup_{j \in \ZZ}|\Ab_j|_{\infty} < \infty.
\end{equation*}
The \emph{symbol} of a finitely supported matrix
sequence, which we write as $\Ab \in \ell_0 (\ZZ)^{(d+1) \times (d+1)}$, is the
matrix valued \emph{Laurent polynomial}
\[
\Ab^* (z) = \sum_{j \in \ZZ} \Ab_j \, z^j, \qquad z \in \CC \setminus
\{0\}.
\]
The scalar peak sequence $\delta \in \ell_0 (\ZZ)$ with $\delta_j =
\delta_{j0}$, $j \in \ZZ$, can also be used to build matrix valued
sequences $\delta \, \Cb$ of any given dimension, where $( \delta \, \Cb
)_0 = \Cb$ and all other values equal to the zero matrix.

The space of all $d$-times continuously differentiable functions $f: \RR \to \RR$ is denoted by $C^d(\RR)$. Similarly, $C_u(\RR)$ is the space of uniformly continuous and bounded functions, whereas $C^d_u(\RR)$
contains all $d$-times continuously differentiable functions with derivatives $f^{(j)} \in C_u(\RR)$, $j=0,\ldots,d$. On $C_u(\RR)$ we use the infinity-norm
\begin{equation*}
\|f\|_{\infty}=\sup_{x \in \RR}|f(x)|,
\end{equation*}
while for a vector-valued function $\gb \in C_u(\RR)^{d+1}$ we employ the norm
\begin{equation*}
\|\gb\|_{\infty}=\sup_{x \in \RR}|\gb(x)|_{\infty}.
\end{equation*}
In the case that $\gb=\left[ g^{(j)} \right]_{j=0}^d$, $g \in
C_u^{d}(\RR)$, we so obtain the Sobolev norm
\begin{equation*}
\|\gb\|_{\infty}=\|g\|_{d,\infty}=\max_{j=0,\ldots,d}\|g^{(j)}\|_{\infty}.
\end{equation*}
For matrix-valued functions $\Gb \in C_u(\RR)^{(d+1) \times (d+1)}$,
the norm is given by the matrix infinity-norm 
\begin{equation*}
\|\Gb\|_{\infty}=\sup_{x\in \RR}|\Gb(x)|_{\infty}.
\end{equation*}

\subsection{Hermite subdivision schemes}
For $n \in \NN$, we define the level-$n$ \emph{subdivision operator}
$S_{\Ab^{[n]}}: \ell(\ZZ)^{d+1} \to \ell(\ZZ)^{d+1}$ with finitely
supported matrix mask $\Ab^{[n]} \in \ell_0 (\ZZ)^{(d+1) \times (d+1)}$
as 
\begin{equation}\label{eq:subdivop}
 (S_{\Ab^{[n]}}\cb)_j=\sum_{k \in \ZZ} \Ab^{[n]}_{j-2k}\cb_k, \qquad j \in \ZZ,\, \cb \in  \ell(\ZZ)^{d+1}.
\end{equation}
One immediately notices the well-known fact that the subdivision
operator is a composition of the  upsampling operator $\uparrow$,
defined as $(\uparrow \Bb)_{2j} = \Bb_{j}$ and
$(\uparrow \Bb)_{2j+1} = \bZero$, $j \in \ZZ$, 
$\Bb \in \ell(\ZZ)^{(d+1) \times (d+1)} $ and
the convolution $\ast$. In view of this, we denote by $*_2$ the
following operation
between two finitely supported masks $\Ab,\Bb \in \ell_0 (\ZZ)^{(d+1) \times (d+1)}$:
$$
\Ab *_2 \Bb = S_{\Ab} \Bb=\Ab\ast (\uparrow\Bb),
$$ 
that is
\begin{equation*}
  (\Ab *_2 \Bb)_j =\sum_{k \in \ZZ}\Ab_{j-2k}\Bb_k, \qquad j \in \ZZ.
\end{equation*}
Note that $*_2$ is neither commutative nor associative, due to which we
define iterated products as
$$
\Ab^{[n]} *_2 \dots *_2 \Ab^{[1]} = \Ab^{[n]} *_2 \left( \Ab^{[n-1]} *_2
  \dots *_2 \Ab^{[1]} \right),
$$
with $\Ab^{[j]} \in \ell (\ZZ)^{(d+1) \times (d+1)}$, $j=1,\dots,n$.

Starting with $\Ab *_2 \Bb *_2 \Cb = \left( \Ab *_2 \Bb \right) *_4
\Cb$, one can then easily prove by induction that
$$
\Ab^{[n]} *_2 \dots *_2 \Ab^{[1]} *_2 \Cb = \left( \Ab^{[n]} *_2 \dots
  *_2 \Ab^{[1]} \right) *_{2^n} \Cb,
$$
so that the 
application of  $m+1$ subdivision steps (\ref{eq:subdivop}) to an
initial sequence $\cb$ can be written, for $n,m \in \NN$, as
\begin{equation}\label{eq:conv}
 S_{\Ab^{[n+m]}}\cdots S_{\Ab^{[n]}}\cb=\Ab^{[n+m]} *_2 \cdots *_2 \Ab^{[n]} *_2\cb.
\end{equation}

Let $(\Ab^{[n]}: n\geq 0)$, be a sequence of finitely supported masks. A
\emph{level-dependent}  \emph{Hermite subdivision scheme} $S(\Ab^{[n]}
: n\geq 0)$ is the procedure of iteratively constructing vector sequences  by the rule
\begin{equation}\label{eq:hermitesubd}
 \Db^{n+1}\cb^{[n+1]}=S_{\Ab^{[n]}}\Db^{n}\cb^{[n]}, \quad n\in \NN,
\end{equation}
starting from an initial sequence $\cb^{[0]}$ of vector-valued  data. The
$k$-th component of $\cb^{[n]}$ is interpreted as the $k$-th derivative of a
function evaluated at the grid $2^{-n} \ZZ$.
In (\ref{eq:hermitesubd}), $\Db$ denotes the diagonal matrix
$\Db=\diag (1,\frac 12,\ldots, \frac 1{2^{d}})$ and the sequence
$\cb^{[n+1]}$ is related to the evaluation of function values and
consecutive derivatives on the dyadic grid $2^{-(n+1)}\ZZ$, where the powers
of $\Db$ in the iteration of (\ref{eq:hermitesubd}) correspond to a chain
rule for the derivatives.

If the same mask $\Ab$ is used at all levels of the subdivision
process, i.e., $\Ab^{[n]}=\Ab$, for $n \in \NN$, the associated Hermite
subdivision scheme is also called ``stationary'' sometimes. In the
following, unless explicitly specified, we always refer to the
level-dependent case.

A Hermite subdivision scheme as in (\ref{eq:hermitesubd}) is called
\emph{interpolatory} if $\cb^{[n+1]}_{2j}=\cb^{[n]}_j$, $j \in \ZZ$, for any $n \in \NN$. 
In this case, all the masks satisfy $\Ab^{[n]}_{2j}=\Db \delta_j$, $j
\in \ZZ$.

The scheme is said to be
\emph{$C^d$-convergent} for some $d\geq 1$, if for any input data
$\cb^{[0]} \in \ell_{\infty}(\ZZ)^{d+1}$ there exists a function
$\bPhi=[\phi_j]_{j=0}^d: \RR \to \RR^{d+1}$, such that the sequence
$\cb^{[n]}$ of refinements satisfies
\begin{equation*}
  \lim_{n \to \infty} \sup_{j \in \ZZ} | \cb^{[n]}_j - \bPhi
  \left(2^{-n}j \right)
  |_{\infty} = 0,
\end{equation*}
and where $\phi_0 \in C^d_u(\RR)$ as well as $\frac{d^j \phi_0}{dx^j}
=\phi_j$, $j=0,\ldots,d$, see \cite{dubuc05,merrien10}. Moreover,
convergence requests that the scheme is \emph{nontrivial}, i.e., that
there exists at least one $\cb^{[0]} \in \ell_{\infty}(\ZZ)^{d+1}$
such that the resulting limit function satisfies $\bPhi \neq 0$.

\subsection{Polynomial and exponential reproduction}
We are interested in Hermite subdivision schemes that reproduce polynomials and
exponentials, i.e., elements of the $(d+1)$-dimensional space 
\begin{equation*}
V_{p,\Lambda}=\operatorname{span}\{ 1,x,\ldots, x^p,
e^{\lambda_1x},\ldots,e^{\lambda_r x}\} 
\end{equation*}
where $\Lambda :=\{\lambda_1,\ldots, \lambda_r \}$ with  $\lambda_j \in
\CC \setminus \{ 0 \}$, $j=1,\ldots, r$ and $d=p+r$. Such schemes have
already
been studied in \cite{conti16,conti17a}, however restricted to \emph{pairs} of
exponential frequencies $\pm \lambda_k$ in $\Lambda$, due to technical
reasons. This restriction
is not needed here, the only requirement is that $\lambda_j \neq
\lambda_k$, $j \neq k$. In the sequel we will always assume that the
frequencies are all distinct.

Following \cite{conti16}, this reproduction property is formulated in
terms of the $V_{p,\Lambda}$-\emph{spectral condition}. In order to
give a definition, we need the following notation: For $f \in
C^d(\RR)$ we denote by $\vb_f: \RR \to \RR^{d+1}$ the vector valued
function 
\begin{equation*}
  \vb_f(x)=\left[f^{(j)}(x)\right]_{j=0}^d, \qquad x \in \RR,
\end{equation*}
and by
$\vb^{[n]}_f := \left. \vb_f \right|_{2^{-n} \ZZ} \in \ell(\ZZ)^{d+1} $
the vector-valued sequences with components
\begin{equation*}
 (\vb^{[n]}_f)_j=\vb_f(2^{-n}j) = \left[f^{(k)}(2^{-n}j)\right]_{k=0}^d,
\qquad j \in \ZZ.
\end{equation*}

\begin{Def}\label{def:spectralcond}
  A Hermite subdivision scheme $S( \Ab^{[n]} : n \ge 0 )$ is said to satisfy the
  \emph{$V_{p,\Lambda}$-spectral
    condition} if
  \begin{equation*}
    S_{\Ab^{[n]}} \Db^{n} \vb^{[n]}_f=\Db^{n+1} \vb^{[n+1]}_f, \qquad f
    \in V_{p,\Lambda}, \, n\in \NN.
  \end{equation*}
\end{Def}

\begin{Rem}
 The $V_{p,\Lambda}$-spectral condition is a generalization of the
 polynomial reproduction property of classical Hermite schemes
 introduced in \cite{dubuc09}, see also \cite{merrien10}. In
 \cite{dubuc09}, the reproduction of polynomials is called the
 \emph{spectral condition}. 
 Since $V_{d,\emptyset}=\Pi_d$, i.e., the space of polynomials of
 order up to $d$, the $V_{d,\emptyset}$-spectral condition is simply called 
 \emph{polynomial reproduction} or  \emph{spectral
   condition}.
\end{Rem}

\noindent
For $n,m \in \NN$, the $V_{p,\Lambda}$-spectral condition
implies
\begin{equation}\label{eq:general_spectralcond_level}
 S_{\Ab^{[n+m]}} \cdots
 S_{\Ab^{[n]}}\Db^n \vb^{[n]}_f=\Db^{m+n+1} \vb^{[n+m+1]}_f,
 \qquad f \in V_{p,\Lambda},
\end{equation}
which reduces to 
\begin{equation*}
 S^{m+1}_{\Ab}\Db^n \vb^{[n]}_f=\Db^{n+m+1} \vb^{[n+m+1]}_f,
 \qquad f \in \Pi_d, 
\end{equation*}
whenever $\Ab^{[n]} = \Ab$, $n \in \NN$.

We end the section by some remarks related to the possibility of
factorizing  the subdivision operator once it satisfies the the
exponential and polynomial preservation property. 
It is shown in \cite{conti16}
that  such a factorization can be given in terms of the so-called
\emph{cancellation operator}  $\cH^{[n]}: \ell(\ZZ)^{d+1} \to
\ell(\ZZ)^{d+1}$, which is a convolution operator $\cH^{[n]} \cb =
\Hb^{[n]} * \cb$ for some $\Hb^{[n]} \in \ell_0(\ZZ)^{(d+1)\times (d+1)}$
and $ \cb \in \ell(\ZZ)^{d+1}$ whose action is described by
\begin{equation*}
  0 = (\cH^{[n]} \vb^{[n]}_f)_j = \sum_{k\in \ZZ} \Hb^{[n]}_{j-k} 
  (\vb^{[n]}_f)_k, \qquad j \in \ZZ, \quad f\in V_{p,\Lambda}.
\end{equation*}
Recall from \cite{conti17a} that a cancellation operator $\cH$ for
$V_{p,\Lambda}$ is called \emph{minimal} if any other convolution
operator $\cH'$ with $\cH' \, V_{p,\Lambda} = 0$ has a factorizable
impulse response in the convolution algebra, i.e., 
$\Hb' = \Cb * \Hb$ for some
finitely supported matrix-valued sequence $\Cb$.
More specifically, the following theorem holds.
 
\begin{thm}\label{T:MiniFact}
  If, for $n \ge 0$, the subdivision operator $S_{\Ab^{[n]}}$  satisfies the
  $V_{p,\Lambda}$-spectral condition, then there exist a minimal
  cancellation operator $\cH^{[n]}$ and a finitely
  supported  mask $\Rb^{[n]}\in \ell_0(\ZZ)^{(d+1)\times (d+1)}$
  such that the factorization property
  \begin{equation}\label{eq:MiniFact}
    \cH^{[n+1]}S_{\Ab^{[n]}} = S_{\Rb^{[n]}}
    \cH^{[n]}
  \end{equation}
  holds true.
\end{thm}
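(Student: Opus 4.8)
The plan is to turn the operator identity \eqref{eq:MiniFact} into a single identity between matrix-valued Laurent polynomials and then show that a certain quotient is again a Laurent polynomial. A direct computation from \eqref{eq:subdivop} gives $(S_{\Ab}\cb)^*(z)=\Ab^*(z)\,\cb^*(z^2)$, and convolution multiplies symbols, so both sides of \eqref{eq:MiniFact} act on an arbitrary $\cb$ as a matrix Laurent polynomial in $z$ applied to $\cb^*(z^2)$. Hence \eqref{eq:MiniFact} is equivalent to the symbol identity
\[
 (\Hb^{[n+1]})^*(z)\,(\Ab^{[n]})^*(z)=(\Rb^{[n]})^*(z)\,(\Hb^{[n]})^*(z^2),
\]
and the theorem reduces to producing a finitely supported $\Rb^{[n]}$ realising it, i.e.\ to showing that the left-hand product is right-divisible by $(\Hb^{[n]})^*(z^2)$ in the algebra of matrix Laurent polynomials.

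First I would construct the family of minimal cancellation operators. The sequences $\{\Db^n\vb_f^{[n]}:f\in V_{p,\Lambda}\}$ appearing in the spectral condition span a finite-dimensional space, and the convolution operators annihilating them form a nontrivial left module over the Laurent polynomials; choosing a minimal generator yields $\Hb^{[n]}$, where minimality is exactly the statement that every annihilator $\Hb'$ factors as $\Hb'=\Cb*\Hb^{[n]}$. The structural fact I would then record is the zero set of $(\Hb^{[n]})^*$: its determinant (which is not identically zero, by minimality) vanishes precisely at the level-$n$ nodes $z=1$ (polynomial part, with the confluent multiplicity dictated by $p$ and $d$) and $z=e^{-\lambda_j 2^{-n}}$, $j=1,\dots,r$ (exponential part), with kernels spanned by the corresponding Taylor and exponential vectors. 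Consequently $\det (\Hb^{[n]})^*(z^2)$ vanishes exactly at the square roots of these points, namely at $\pm 1$ and at $\pm e^{-\lambda_j 2^{-(n+1)}}$.

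The core of the proof is the divisibility check at these square-root points, where the two hypotheses enter in complementary ways. At a ``$+$'' root ($1$ or $e^{-\lambda_j 2^{-(n+1)}}$), which is itself a level-$(n+1)$ node, the left factor $(\Hb^{[n+1]})^*$ already annihilates the relevant kernel vector, so the pole of $[(\Hb^{[n]})^*(z^2)]^{-1}$ there is absorbed by the first factor. At a ``$-$'' root ($-1$ or $-e^{-\lambda_j 2^{-(n+1)}}$) the $V_{p,\Lambda}$-spectral condition, rewritten through the polyphase (even/odd) decomposition of $(\Ab^{[n]})^*$, forces $(\Ab^{[n]})^*$ to annihilate the same kernel vector on the odd coset; this is the Hermite analogue of the classical sum rules, and it cancels the remaining pole through the middle factor. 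Combining both cases shows that $(\Rb^{[n]})^*(z):=(\Hb^{[n+1]})^*(z)(\Ab^{[n]})^*(z)\,[(\Hb^{[n]})^*(z^2)]^{-1}$ has no poles, hence is a matrix Laurent polynomial, whose coefficient sequence is the sought finitely supported $\Rb^{[n]}$.

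I expect the main obstacle to be the bookkeeping of \emph{multiplicities}, rather than the bare kernel inclusions. At $z=1$ and its roots the polynomial part imposes confluent, derivative-type conditions, so one must differentiate both the cancellation identities and the spectral condition the correct number of times and verify that the order of the zero supplied by $(\Hb^{[n+1]})^*$ and by $(\Ab^{[n]})^*$ matches the pole order of $[(\Hb^{[n]})^*(z^2)]^{-1}$ exactly. Minimality of $\cH^{[n]}$ is precisely what makes this feasible: it guarantees that $(\Hb^{[n]})^*$ carries no superfluous zeros, so the pole orders to be cancelled are exactly those governed by the spectral condition and nothing more. The only remaining point is the routine consistency check that the same minimal generators are used to build both $\cH^{[n]}$ and $\cH^{[n+1]}$, so that the factorization closes up uniformly across levels.
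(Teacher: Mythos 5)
First, note the baseline: the paper states Theorem~\ref{T:MiniFact} without proof, deferring to \cite{conti16,conti17a}; the argument there is a short algebraic one built directly on minimality, not on zero-counting of determinants. Your setup is correct as far as it goes: the symbol identity $(\Hb^{[n+1]})^*(z)(\Ab^{[n]})^*(z)=(\Rb^{[n]})^*(z)(\Hb^{[n]})^*(z^2)$ is the right reformulation, and your pointwise observations at the exponential nodes are verifiable. Indeed, writing $\ub_n$ for the level-$n$ kernel vector at $z_0=e^{-\lambda_j 2^{-n-1}}$ and $\ub_{n+1}$ for the level-$(n+1)$ one, the spectral condition split into even/odd polyphase parts gives $\Ab_e^*(z_0^2)\ub_n=\ub_{n+1}$ and $z_0\Ab_o^*(z_0^2)\ub_n=\ub_{n+1}$, hence $(\Ab^{[n]})^*(-z_0)\ub_n=0$ (your ``$-$'' case) and $(\Ab^{[n]})^*(z_0)\ub_n=2\ub_{n+1}\in\ker(\Hb^{[n+1]})^*(z_0)$ (your ``$+$'' case; note this last membership is an extra consequence of the spectral condition that you should state explicitly — ``absorbed by the first factor'' tacitly uses that $(\Ab^{[n]})^*(z_0)$ maps the level-$n$ kernel \emph{into} the level-$(n+1)$ kernel).

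The genuine gap is exactly where you flag it, and it is not mere bookkeeping: at $z=\pm1$ the polynomial part of $\cH^{[n]}$ (the complete Taylor factor $z^{-1}\Ib+\Tb_0$) makes $\det(\Hb^{[n]})^*(z^2)$ vanish to order $p+1$ at each of $\pm1$, and the associated kernel structure is a confluent Jordan chain, not a simple eigenvector. Pointwise kernel matching as in your ``$+$/$-$'' dichotomy then proves nothing; one must verify derivative conditions on $(\Hb^{[n+1]})^*(\Ab^{[n]})^*$ up to order $p$ against the local Smith form of $(\Hb^{[n]})^*(z^2)$, and this — the heart of the divisibility claim — is left unproven in your proposal. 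The frustrating part is that you already possess the tool that makes the entire determinant analysis unnecessary: your own definition of minimality (every convolutional annihilator $\Hb'$ of the level-$n$ data factors as $\Hb'=\Cb*\Hb^{[n]}$). Apply it not pointwise but to the mask $\Bb:=\Hb^{[n+1]}*\Ab^{[n]}$, for which $\cH^{[n+1]}S_{\Ab^{[n]}}\cb=\Bb *_2\cb$. By the spectral condition $S_{\Ab^{[n]}}$ maps the level-$n$ data $\Db^n\vb_f^{[n]}$, $f\in V_{p,\Lambda}$, to level-$(n+1)$ data, which $\cH^{[n+1]}$ annihilates; hence both polyphase components $(\Bb_e)_j=\Bb_{2j}$ and $(\Bb_o)_j=\Bb_{2j+1}$ are ordinary convolutional annihilators at level $n$, so minimality yields $\Bb_e=\Cb_e*\Hb^{[n]}$ and $\Bb_o=\Cb_o*\Hb^{[n]}$, and interleaving $\Rb^{[n]}_{2j}=(\Cb_e)_j$, $\Rb^{[n]}_{2j+1}=(\Cb_o)_j$ gives \eqref{eq:MiniFact} directly, with no zeros, multiplicities, or Smith forms in sight. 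This is, in essence, the proof in the references the paper cites; your route could in principle be completed, but only at the cost of reproving by hand (via Jordan chains at $\pm1$) what minimality hands you for free. A final minor point: keep the normalization consistent — the paper's definition of $\cH^{[n]}$ acts on $\vb_f^{[n]}$ while its lemma imposes the conditions on $\Db^n\vb_f^{[n]}$; the two annihilator modules differ only by right multiplication with the invertible constant $\Db^n$, so minimality is unaffected, but the factorization must be set up in one convention throughout.
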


\noindent
The structure of $\cH^{[n]}$ is given in \cite{conti16} for the
case where the exponentials in $V_{p,\Lambda}$ are associated to pairs
of frequencies $\pm\lambda_k$. In our more general setting, a similar
structure can be derived, as shown in the following lemma. For its
formulation, we use the notation 
$$
\Db(e^\Lambda)=\mbox{diag}\, (e^{\lambda_1},\ldots, e^{\lambda_r}),
\qquad 
\Wb_n(\Lambda)=\left[
  \begin{array}{ccc}
    1& \dots& 1\\
    \vdots& & \vdots\\
    \lambda_1^{n-1}& \dots & \lambda_r^{n-1} \\
  \end{array}\right]
$$
and keep in mind that both matrices are nonsingular since the
$\lambda_1,\dots,\lambda_r$ are nonzero and disjoint; the latter
guarantees that the \emph{Vandermonde matrix} $\Wb_n (\Lambda)$ is
invertible.

\begin{lem}
 The unique minimal cancellation operator for $V_{p,\Lambda}$ on level
 $n$ is given as
 $$
 \cH^{[n]}=\cH_{2^{-n}\Lambda},
 $$
 where $\cH_{\Lambda}$ is the convolution operator with associated symbol
 $$
 \Hb^*(z) := \Hb_\Lambda^* (z)
 = \left[
   \begin{array}{cc}
     z^{-1}\Ib+\Tb_0 & \Qb \\ \bZero & z^{-1}\Ib+\Rb_0
   \end{array}\right],
 $$
 defined by the scalar matrices 
 \begin{eqnarray*}
   \Tb_0&=&\left[ 
            \begin{array}{ccccc}   
              -1 &-1 & \cdots &-\frac 1{(p-1)!} &-\frac 1{p!}\\
              0 & -1 &\ddots & \vdots & \vdots\\
              \vdots &   &\ddots \\
                 &&& -1& -1\\[0.5em]
              0   & \dots & & 0 & -1
            \end{array}
                                  \right] \in \RR^{(p+1)\times (p+1)},\\
   \Qb&=&-\left( \Wb_{p+1}(\Lambda) \, \Db(e^\Lambda)+\Tb_0
          \Wb_{p+1}(\Lambda)\right)\Db(e^\Lambda)^{-p-1} \Wb_r(\Lambda)^{-1}\in \RR^{(p+1)\times r},
   \\
   \Rb_0&=&-\Wb_r(\Lambda) \, \Db(e^\Lambda)\, \Wb_r(\Lambda)^{-1}\in
            \RR^{r\times r}.
 \end{eqnarray*}
\end{lem}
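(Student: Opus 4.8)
The plan is to reduce the level-$n$ assertion to a single computation on the grid $\ZZ$ and then to read off the symbol block by block. First I would use the chain rule to descend to level $0$. For $f\in V_{p,\Lambda}$ put $g=f(2^{-n}\cdot)$; then the chain-rule--normalised Hermite data satisfies $\bigl(\Db^{n}\vb^{[n]}_f\bigr)_k=\bigl[2^{-nm}f^{(m)}(2^{-n}k)\bigr]_{m=0}^d=\vb_g(k)$, i.e. the integer-grid samples of $g$, and as $f$ runs through $V_{p,\Lambda}$ the function $g$ runs through $V_{p,2^{-n}\Lambda}$. Since the annihilation condition and the notion of minimality are identical on both sides, the minimal cancellation operator on level $n$ for $V_{p,\Lambda}$ is exactly the minimal level-$0$ cancellation operator for $V_{p,2^{-n}\Lambda}$. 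It therefore suffices to show that $\cH_\Lambda$, the convolution operator with the stated symbol, is the unique minimal level-$0$ cancellation operator for $V_{p,\Lambda}$ for arbitrary distinct nonzero frequencies, and then to substitute $2^{-n}\Lambda$.

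Next I would characterise $\Hb_0$ abstractly. Since $V_{p,\Lambda}=\ker\bigl(D^{p+1}\prod_{l=1}^r(D-\lambda_l)\bigr)$ is shift invariant and $(d+1)$-dimensional, and the initial-value map $g\mapsto\vb_g(0)$ is an isomorphism onto $\RR^{d+1}$, there is a single matrix $\Gb_\Lambda\in\RR^{(d+1)\times(d+1)}$, the one-step translation expressed in Hermite coordinates, with $\vb_g(k+1)=\Gb_\Lambda\vb_g(k)$ for all $g\in V_{p,\Lambda}$ and all $k$. Writing out the convolution with symbol $z^{-1}\Ib+\Hb_0$ gives $(\cH_\Lambda\vb_g)_k=\vb_g(k+1)+\Hb_0\vb_g(k)=(\Gb_\Lambda+\Hb_0)\vb_g(k)$, and because the vectors $\vb_g(0)$ already exhaust $\RR^{d+1}$ the annihilation property holds if and only if $\Hb_0=-\Gb_\Lambda$. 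The task thus becomes the explicit evaluation of $\Gb_\Lambda$ on the spanning set $\{1,x,\dots,x^p,e^{\lambda_1x},\dots,e^{\lambda_rx}\}$, split according to the first $p+1$ and the last $r$ derivative components.

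For a polynomial of degree $\le p$ the last $r$ components vanish, and Taylor's formula $g^{(i)}(x+1)=\sum_{j\ge i}\tfrac{1}{(j-i)!}\,g^{(j)}(x)$ identifies the upper-left block of $-\Gb_\Lambda$ with $\Tb_0$, while simultaneously forcing the lower-left block to be $\bZero$. For an exponential $g=e^{\lambda x}$ the vector $\vb_g(0)=[\lambda^m]_{m=0}^d$ is an eigenvector of $\Gb_\Lambda$ with eigenvalue $e^{\lambda}$; collecting the $r$ frequencies and observing that the columns of $\Wb_r(\Lambda)$ are precisely the exponential coordinate blocks of these eigenvectors yields the similarity $-\Rb_0=\Wb_r(\Lambda)\Db(e^\Lambda)\Wb_r(\Lambda)^{-1}$ for the lower-right block. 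Here the invertibility of $\Wb_r(\Lambda)$ and of $\Db(e^\Lambda)$ noted before the lemma, which rests on the frequencies being nonzero and pairwise distinct, is exactly what is used.

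Finally, reading the upper $p+1$ rows of the eigenvector equations $\Gb_\Lambda\vb_{g_l}(0)=e^{\lambda_l}\vb_{g_l}(0)$ with $g_l=e^{\lambda_l x}$, and subtracting the contribution already described by $\Tb_0$, leaves a $(p+1)\times r$ linear system for $\Qb$; solving it against the invertible Vandermonde data produces precisely the stated expression for $\Qb$. For minimality and uniqueness I would invoke Theorem~\ref{T:MiniFact} for the existence of a minimal cancellation operator and then note that our operator attains the least possible order: it is monic, since the coefficient of $z^{-1}$ is $\Ib$, and its symbol has determinant $\det\Hb^*_\Lambda(z)=z^{-(d+1)}(1-z)^{p+1}\prod_{l=1}^r(1-z\,e^{\lambda_l})$, whose $d+1$ finite zeros are exactly those forced by annihilating a $(d+1)$-dimensional space, so that any other annihilator factors through $\cH_\Lambda$ by left division in the matrix Laurent-polynomial algebra. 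The main obstacle is the bookkeeping for the coupling block $\Qb$: unlike the diagonal blocks, which follow cleanly from the Taylor shift and from Vandermonde diagonalisation, $\Qb$ records how the polynomial-order derivative components of an exponential's Hermite vector are corrected, and extracting its closed form requires careful tracking of the factors $\Wb_{p+1}(\Lambda)$, $\Wb_r(\Lambda)$ and the accompanying diagonal scalings.
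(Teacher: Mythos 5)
Your argument is correct and, at its computational core, it coincides with the paper's proof: your eigenvector equations $\Gb_\Lambda\vb_{g_l}(0)=e^{\lambda_l}\vb_{g_l}(0)$ are, written out, exactly the paper's conditions $\Hb^*\left(e^{-\lambda_j}\right)\left[\lambda_j^k\right]_{k=0}^d=0$, and both resolve into the same pair of Vandermonde systems for $\Qb$ and $\Rb_0$. Where you differ is in the packaging, and the differences are genuine improvements in self-containedness. The paper \emph{posits} the block-triangular, monic first-order ansatz with the complete Taylor operator $\Tb_p^*(z)$ in the upper-left block (citing \cite{merrien10}) and refers minimality to \cite{conti17a}; you instead (i) make the level-$n$ to level-$0$ reduction $f\mapsto f(2^{-n}\cdot)$, $V_{p,\Lambda}\mapsto V_{p,2^{-n}\Lambda}$ explicit, which the paper performs only implicitly, (ii) derive the whole matrix $\Hb_0=-\Gb_\Lambda$, and with it the triangular structure, the vanishing lower-left block and the Taylor block, from shift invariance of $V_{p,\Lambda}$ together with the poisedness of the initial-value map $g\mapsto\vb_g(0)$ (the invertibility of $\Vb_d$), which buys you uniqueness within the monic first-order class for free, and (iii) sketch a self-contained minimality argument. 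On (iii), the operative mechanism is the last one you mention, not the determinant: counting the zeros of $\det\Hb_\Lambda^*(z)$ does not by itself yield left divisibility in the matrix Laurent-polynomial algebra. What does work is division with remainder against the monic degree-one symbol: any annihilating symbol can be written as $\Cb^*(z)\Hb_\Lambda^*(z)$ plus a constant remainder, and the remainder kills every $\vb_g(k)$, hence is zero by the same spanning property. Drop the determinant remark and carry out that two-line division argument instead.

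One detail needs repair. The exponential coordinate blocks of the eigenvectors are \emph{not} the columns of $\Wb_r(\Lambda)$ but of $\Wb_r(\Lambda)\diag\left(\lambda_1^{p+1},\ldots,\lambda_r^{p+1}\right)$, since $\left[\lambda_l^k\right]_{k=p+1}^d=\lambda_l^{p+1}\left[\lambda_l^k\right]_{k=0}^{r-1}$. For $\Rb_0$ this is harmless: the diagonal commutes with $\Db(e^\Lambda)$ and cancels in the similarity, so your conclusion $-\Rb_0=\Wb_r(\Lambda)\Db(e^\Lambda)\Wb_r(\Lambda)^{-1}$ stands. For $\Qb$, however, the diagonal does not cancel: the upper rows of the eigenvector equations give $\Wb_{p+1}(\Lambda)\Db(e^\Lambda)+\Tb_0\Wb_{p+1}(\Lambda)+\Qb\,\Wb_r(\Lambda)\diag\left(\lambda_1^{p+1},\ldots,\lambda_r^{p+1}\right)=0$, so the solve produces the inverted diagonal of the $(p+1)$st \emph{powers of the frequencies} sandwiched between the parenthesized term and $\Wb_r(\Lambda)^{-1}$. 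This is exactly the bookkeeping you flag as the main obstacle, and it must actually be done rather than announced; in particular, as stated, your claim that the blocks are columns of $\Wb_r(\Lambda)$ would give a wrong $\Qb$. Finally, invoking Theorem~\ref{T:MiniFact} for existence is legitimate since it precedes the lemma, but note that your division argument renders it unnecessary: you construct the annihilator explicitly and prove its minimality directly.
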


\begin{proof}
  Proceeding like in \cite{conti16}, we denote by $\Tb_p^*(z)$ the
  symbol of the \emph{complete Taylor operator}
  \cite{merrien10}, and can obtain $\cH^{[n]}$ from the operator
  $\cH_{\Lambda}$
  with symbol
  $$
  \Hb^*(z)
  = \left[ \begin{array}{cc}
             \Tb_{p}^*(z) & \Qb \\ \bZero & \Rb^*(z)
           \end{array}\right]= \left[ \begin{array}{cc}
                                        z^{-1}\Ib+\Tb_0 & \Qb \\
                                        \bZero & z^{-1}\Ib+\Rb_0 
                                      \end{array}\right]
  $$
  for some $\Qb$ and $\Rb_0$, which must satisfy
  $$
  \Hb^* \left( e^{-\lambda_j} \right)\Db^n \,
  \left[ \lambda_j^{k}
  \right]_{k=0}^d
  = 0, \qquad j=1,\ldots,r.
  $$
  The conditions 
  $$
  \left[
    \begin{array}{c|c}
      e^{\lambda_j}\Ib+\Tb_0 & \Qb \\
      \hline
      \bZero & e^{\lambda_j}\Ib+\Rb_0
    \end{array}
  \right]\,
  \left[
    \begin{array}{l}
      \left[\lambda_j^k\right]_{k=0}^p \\[0.2cm]\hline
      \left[\lambda_j^k\right]_{k=p+1}^d
    \end{array}
  \right]=0, \qquad j=1,\ldots, r,
  $$
  can be written as
  $$
  \Wb_{p+1}(\Lambda)\, \Db(e^\Lambda)
  + \Tb_0\,\Wb_{p+1}(\Lambda)
  +\Qb\,\Wb_{r}(\Lambda)\, \Db(e^\Lambda)^{p+1}=0
  $$
  and
  $$
  \Wb_{r}(\Lambda)\, \Db(e^\Lambda)
  +\Rb_0 \,\Wb_{r}(\Lambda)=0,
  $$
  from which it follows that
  \begin{align*}
    \Qb&=
         -\left(\Wb_{p+1}(\Lambda)\,\Db(e^\Lambda)+\Tb_0
         \Wb_{p+1}(\Lambda)\right)
         \Db(e^\Lambda)^{-p-1} \Wb_r(\Lambda)^{-1},
    \\
    \Rb_0&=-\Wb_r(\Lambda) \, \Db(e^\Lambda)\, \Wb_r(\Lambda)^{-1}.
  \end{align*}
  The remaining arguments, in particular minimality, are as in
  \cite{conti17a}.
\end{proof}

\subsection{Basic limit functions and refinability}
Applying a $C^d$-convergent Hermite subdivision scheme $S(\Ab^{[n]} :
n \geq 0)$ to the input data $\delta \eb_j$, we
obtain, for each $j=0,\ldots d$, a vector consisting of a limit
function $\phi_j$ and all its derivatives. Together,
all such $\phi_j$, $j=0,\ldots,d$, give rise to the \emph{basic limit
  function}
\begin{equation*}
  \Fb=\left[\begin{array}{cccc}
    \phi_0  & \phi_1 & \ldots & \phi_d \\
    \phi_0' & \phi_1' & \ldots & \phi_d' \\
    \vdots & & & \\
    \phi_0^{(d)} & \phi_1^{(d)} & \ldots & \phi_d^{(d)}
  \end{array} \right].
\end{equation*}
In addition to the scheme $S(\Ab^{[n]} : n\geq 0)$ it is also useful
to consider the subdivision schemes
$S(\Ab^{[n+\ell]} : n\geq 0)$, $\ell \ge 0$, with the iteration
\begin{equation}\label{eq:allschemes}
  (\Db^{n+1}\cb^{[n+1]})_j=\sum_{k\in \ZZ}\Ab
  _{j-2k}^{[n+\ell]}\Db^n\cb^{[n]}_k, \qquad j\in\ZZ.
\end{equation}

\begin{Rem}\label{R:AsymptHermite}
  The assumption that $S(\Ab^{[n+\ell]} : n\geq 0)$, $\ell \ge 0$, is
  convergent is a standard one in level-dependent subdivision as it
  ensures the existence of a \emph{refinement equation} which we will
  give in the following Lemma~\ref{L:Refinement}. As shown in
  \cite{DynLevin95}, this property follows
  from the convergence of $S(\Ab^{[n]} : n\geq 0)$, provided that the
  scheme is \emph{asymptotically equivalent} to some $C^d$-convergent
  classical 
  Hermite subdivision scheme based on a mask $\Ab \in
  \ell_0 (\ZZ)^{(d+1) \times (d+1)}$. This property is defined as
  $$
  \sum_{n=0}^\infty \| S_{\Ab^{[n]}} - S_\Ab \|_\infty
  < \infty,
  $$
  cf. \cite{DynLevin95}.
  Also recall from \cite{conti16,conti17a} the fact that the
  $V_{p,\Lambda}$-preserving schemes built by \eqref{eq:MiniFact} with
  $\Rb^{[n]} := \Rb$, $n \ge 0$, for some $\Rb$
  are asymptotically equivalent to the Hermite scheme based on a
  Taylor factorization \cite{merrien10} with factor $\Rb$. Their
  limit mask is the one related to the Hermite subdivision scheme
  prserving $V_{p+r,\emptyset} = \Pi_d$. Note that this is the 
  ``Hermite analogy'' of the \emph{exponential B--spline} introduced
  in \cite{unser05}. 
\end{Rem}

\noindent
If we suppose that $S(\Ab^{[n+\ell]} : n\geq 0)$, $\ell \ge 0$, are
all convergent and
apply all such schemes to the same initial data 
$\cb^{[0]}=\delta\Ib_{d+1}$, we obtain
a \emph{sequence} of basic limit functions $(\Fb^{[\ell]} : \ell\geq
0)$, where $\Fb^{[0]}=\Fb$. Those basic limit functions are connected
by a \emph{level-dependent refinement equation} which we prove in the
next lemma for the sake of completeness.
We mention, however, that this refinement equation 
is already stated in \cite{cotronei17}, and well-known and popular
in the level-dependent non-Hermite \cite[Section 2.3]{dyn02}
as well as in the stationary Hermite \cite[Theorem 19]{dubuc05}
setting. The extension here is an adaption of technique given there.

\begin{lem}\label{L:Refinement}
  If $S( \Ab^{[n+\ell]} : n \ge 0)$, $\ell \ge 0$, are
  $C^d$-convergent Hermite subdivision schemes, then the associated
  sequence of basic limit functions
  $(\Fb^{[\ell]}: \ell \ge 0)$ satisfies 
  \begin{equation}\label{eq:refinement}
    \Fb^{[\ell]}=\sum_{k \in \ZZ}\Db^{-1}\Fb^{[\ell+1]}(2 \cdot -
    k)\Ab^{[\ell]}_k,
    \qquad \ell\in \NN.
  \end{equation}
\end{lem}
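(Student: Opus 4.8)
The plan is to derive the refinement equation \eqref{eq:refinement} directly from the definition of the basic limit functions as limits of the subdivision process, exploiting the fact that applying one subdivision step to the scheme $S(\Ab^{[\ell]}:n\ge 0)$ shifts the starting level by one and thus produces the next scheme $S(\Ab^{[\ell+1]}:n\ge 0)$. Concretely, recall that $\Fb^{[\ell]}$ arises from running $S(\Ab^{[n+\ell]}:n\ge 0)$ on the initial data $\cb^{[0]}=\delta\Ib_{d+1}$, so that the refined sequences $\Db^n\cb^{[n]}$ converge uniformly on the grid $2^{-n}\ZZ$ to the matrix-valued function $\Fb^{[\ell]}$ evaluated there. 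The key observation is that one application of the level-$\ell$ operator $S_{\Ab^{[\ell]}}$ to $\delta\Ib_{d+1}$ yields, by \eqref{eq:subdivop}, the sequence whose $j$-th entry is $\Ab^{[\ell]}_{j}$; from that point on the iteration proceeds exactly as the scheme $S(\Ab^{[n+\ell+1]}:n\ge 0)$ applied to the appropriately placed data.

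First I would unwind the iteration \eqref{eq:allschemes} one step: writing the limit of $S(\Ab^{[n+\ell]}:n\ge 0)$ starting from $\delta\Ib_{d+1}$, the first step produces $\Db\cb^{[1]}_j = \Ab^{[\ell]}_j\,\Db^0\cb^{[0]} = \Ab^{[\ell]}_j$ where I use $(\delta\Ib_{d+1})_0=\Ib_{d+1}$ and all other entries zero. Next I would recognize that continuing the subdivision from $\cb^{[1]}$ under the operators $S_{\Ab^{[\ell+1]}},S_{\Ab^{[\ell+2]}},\dots$ is precisely the scheme $S(\Ab^{[n+\ell+1]}:n\ge 0)$, whose convergence (assumed in the hypothesis for all shifts) gives the limit in terms of $\Fb^{[\ell+1]}$. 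The technical heart is to track the grid rescaling and the diagonal factors $\Db^n$ carefully: since $\cb^{[1]}$ lives on the grid $2^{-1}\ZZ$ and equals $\Db^{-1}\Ab^{[\ell]}_j$ at the node $2^{-1}j$, running the shifted scheme to its limit contributes a term $\Db^{-1}\Fb^{[\ell+1]}(2\cdot - k)\Ab^{[\ell]}_k$ for each nonzero node $k$ of the mask, where the argument $2\cdot - k$ encodes both the doubling of the grid and the translation by $k$.

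To make this rigorous I would fix $x\in 2^{-n}\ZZ$ for large $n$, evaluate $\Db^n \cb^{[n]}$ at $x$ using \eqref{eq:conv} to telescope the $n$ subdivision steps, split off the very first step as above, and then pass to the limit $n\to\infty$ using the uniform convergence that defines $\Fb^{[\ell+1]}$, together with the finite support of $\Ab^{[\ell]}$ which ensures the sum over $k$ has only finitely many nonzero terms and so the limit may be interchanged with the finite summation. The placement of $\Db^{-1}$ on the left and $\Ab^{[\ell]}_k$ on the right follows from the chain-rule role of $\Db$ noted after \eqref{eq:hermitesubd}: the limit function at the finer level carries derivatives scaled by one extra power of $\Db$, which must be undone by $\Db^{-1}$ to match the coarser-level normalization of $\Fb^{[\ell]}$.

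The main obstacle I anticipate is the bookkeeping of the diagonal scaling matrices $\Db^n$ and the grid dilation simultaneously: one must verify that the factor $\Db$ accumulated per level in \eqref{eq:hermitesubd} combines correctly with the argument substitution $x\mapsto 2x-k$ so that the derivative-matrix structure of $\Fb^{[\ell]}$ (rows indexed by derivative order) is respected on both sides of \eqref{eq:refinement}. Since the statement's hypotheses guarantee convergence of all the shifted schemes, and the mask is finitely supported, no genuine analytic difficulty arises beyond this careful matching of normalizations, which is exactly the adaptation of the stationary argument in \cite[Theorem 19]{dubuc05} and the non-Hermite argument in \cite[Section 2.3]{dyn02} alluded to in the remark preceding the lemma.
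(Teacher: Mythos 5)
Your proposal is correct and takes essentially the same route as the paper's proof: both split off the first application of $S_{\Ab^{[\ell]}}$ to $\delta\Ib_{d+1}$ (giving $\Db\cb^{[1]}=\Ab^{[\ell]}$), identify the remaining iteration as the shifted scheme $S(\Ab^{[n+\ell+1]}: n\ge 0)$ — the paper encodes this in the identity $\Db^{n+1}\Cb^{[n+1,\ell]}=\bigl(\Db^{n}\Cb^{[n,\ell+1]}\bigr)*_{2^n}\Ab^{[\ell]}$ — and then pass to the limit using the uniform convergence of the shifted scheme together with the finite support of $\Ab^{[\ell]}$ to interchange limit and (finite) sum. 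The only cosmetic difference is that the paper evaluates along sequences $j_n/2^n\to x$ via piecewise-linear interpolants to treat arbitrary $x\in\RR$ directly, whereas you work at dyadic points, which suffices by continuity of the limit functions.
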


\begin{proof}
  We iterate the subdivision scheme on matrix valued data.
  For $\Cb^{[0]}=\delta\Ib_{d+1}$, the first iteration of
  (\ref{eq:allschemes}) yields for $\ell\in \NN$, that
  $\Db\Cb^{[1,\ell]}=\Ab^{[\ell]}$, hence
  \begin{eqnarray*}
  (\Db^{n+1}\Cb^{[n+1,\ell]})_j
  &=& \left( \Ab^{[\ell+n]} *_2 \cdots *_2 \Ab^{[\ell+1]} \right)
  *_{2^n} \Db \Cb^{[1,\ell]} \\
  &=& \left( \Ab^{[\ell+n]} *_2 \cdots *_2
    \Ab^{[\ell+1]} \right) *_{2^n} \Ab^{[\ell]}
  \end{eqnarray*}
  or, equivalently
  \begin{equation}
    \label{eq:LRefinementPf1}
    \Db^{n+1}\Cb^{[n+1,\ell]}
    =\Ab^{[\ell+n]} *_2 \cdots *_2 \Ab^{[\ell+1]} *_2 \Ab^{[\ell]}.  
  \end{equation}
  Interpreting (\ref{eq:LRefinementPf1}) as
  $$
  \left( \Ab^{[\ell+n]} *_2 \cdots *_2 \Ab^{[\ell+1]} \right) *_{2^n}
  \Ab^{[\ell]} = \left(\Db^{n}\Cb^{[n,\ell+1]}\right) *_{2^n} \Ab^{[\ell]},
  $$
  it follows that
  $$
  (\Db\Cb^{[n+1,\ell]})_j = \sum_{k\in \ZZ} \Cb^{[n,\ell+1 ]}_{j-2^nk}
  \Ab^{[\ell]}_k, \qquad j \in \ZZ.
  $$
  Let $x\in \RR$ and let $\left(j_n: n \in \NN \right)$ be a sequence of integers such that
  $j_n/2^n\to x$ as $n\to \infty$. Let   $ \Fb^{[n,\ell]}$ denote the
  piecewise linear matrix valued functions such that
  \begin{equation*}
    \Cb_{j_n}^{[n,\ell]}=\Fb^{[n,\ell]}\left(\frac {j_n}{2^n}\right), \qquad
    n,\ell \ge 0.
  \end{equation*}
  We have that
  \begin{equation*}
    \Db \Fb^{[n+1,\ell]}\left(\frac {j_n}{2^{n+1}}\right) =
    \sum_{k\in \ZZ}\Fb^{[n,\ell+1]}\left(\frac
      {{j_n}-2^nk}{2^{n}}\right) \, \Ab^{[\ell]}_k,
  \end{equation*}
  and the uniform convergence of $\Fb^{[n,\ell]}$ to $\Fb^{[\ell]}$ for
  $n \to \infty$ yields (\ref{eq:refinement}).
\end{proof}

\noindent
Note that in the case $\Ab^{[n]} = \Ab$, $n \ge 0$, there is only one
basic limit 
function $\Fb$ which satisfies the \emph{refinement equation}
\begin{equation}\label{eq:2_scale}
\Fb=\sum_{k\in\ZZ}\Db^{-1}\Fb(2\cdot - k)\Ab_k.
\end{equation}
Therefore Lemma~\ref{L:Refinement} is a generalization of 
\cite[Theorem 19]{dubuc05}, where (\ref{eq:2_scale}) can already be
found.

For interpolatory subdivision
schemes, the values of the basic limit functions $\Fb^{[\ell]}$ at the
dyadics $2^{-n}\ZZ$ are exactly the coefficients of the corresponding
schemes at level $n$ and $\Fb^{[\ell]}$ is \emph{cardinal}, that
is
\begin{equation}\label{eq:cardinality}
\Fb^{[\ell]}(k)=\delta_{k} \Ib_{d+1}, \qquad k \in \ZZ, \quad \ell\ge 0.
\end{equation}

By iteration of (\ref{eq:refinement}), it is easy to see that the sequence of basic limit functions of
a $C^d$-convergent Hermite subdivision scheme satisfies
\begin{equation}\label{eq:refinement_iteration}
  \Fb^{[\ell]} = \sum_{k \in \ZZ}
  \Db^{-m}\Fb^{[\ell+m]}(2^{m}\cdot-k)(\Ab^{[\ell+m-1]} *_2 \cdots *_2
  \Ab^{[\ell]})_k, \qquad \ell \ge 0, \, m \ge 1,
\end{equation}
which reduces to
\begin{equation*}
  \Fb=\sum_{k\in\ZZ}\Db^{-m}\Fb(2^m\cdot - k)\Ab_k^m, \qquad m\in \NN,
\end{equation*}
if all masks coincide. From (\ref{eq:cardinality}) and
(\ref{eq:refinement_iteration})
we get explicit representations for the 
basic limit functions of an interpolatory $C^d$-convergent Hermite
subdivision scheme at the integers, namely,
\begin{equation}\label{lem:leveln_card}
  \Fb^{[\ell]}(2^{-m}k)=\Db^{-m}(\Ab^{[\ell+m-1]} *_2 \cdots *_2
  \Ab^{[\ell]})_k, \qquad k \in \ZZ, \quad \ell \ge 0, \, m \geq 1,
\end{equation}
%
%
and
\begin{equation*}
  \Fb(2^{-m}k)=\Db^{-m}\Ab^m_k, \qquad k \in \ZZ, \quad m \geq 1,
\end{equation*}
respectively.

\section{Wavelets defined by interpolatory Hermite
  subdivision}\label{sec:MRA_Hermite}
In this section we briefly review the  construction of a
level-dependent MRA based on interpolatory Hermite subdivision which
was suggested in \cite{cotronei17}.

To that end, we start with a $C^d$-convergent \emph{interpolatory}
Hermite subdivision scheme $S(\Ab^{[n]} : n\geq 0)$. The sequence of
basic limit functions
$(\Fb^{[n]}: n\geq 0)$, more precisely their first rows
$\bPhi^{[n]}=[\phi^{[n]}_j]_{j=0}^d$, $n \ge 0$, span a
level-dependent MRA $(V_n: n \geq 0)$ for the space
$C^{d}_u(\RR)$. This means that the spaces $V_n$ are still nested but
the refinement property between $V_n$ and $V_{n+1}$ depends on $n$ as
it is now based on \eqref{eq:refinement}. Since the subdivision scheme
is interpolatory, the projection of $f \in C^{d}_u(\RR)$ onto $V_n$ is
given by the Hermite interpolant 
\begin{equation}\label{eq:projV}
{\cal P}_n f
=
\sum_{k\in \ZZ} \left(\bPhi^{[n]}\right)^T(2^n\cdot -k) \cb^{[n]}_k,
\end{equation}
with
\begin{equation*}
 \cb^{[n]}=\Db^n \vb_f^{[n]}.
\end{equation*}
The associated \emph{wavelet space} $W_n$ is the complement of $V_n$
in $V_{n+1}$. Taking into account that
\[
{\cal P}_{n+1} f={\cal P}_n f+ ({\cal P}_{n+1} - {\cal P}_n) f
={\cal P}_n f+{\cal Q}_n f,
\]
the projection ${\cal Q}_n f$ onto the wavelet space is given by
\begin{equation}\label{eq:def_Qn}
{\cal Q}_n f = {\cal P}_{n+1} f - {\cal P}_n f,
\end{equation}
and we set $W_n = \cQ_n V_{n+1}$.
It is shown in \cite{cotronei17} that
\begin{equation}\label{eq:Qn}
{\cal Q}_n f= \sum_{k \in \ZZ} (\bPhi^{[n+1]})^T(2^{n+1} \cdot -k)\db^{[n]}_k,
\end{equation}
where the wavelet coefficients are given by the
\emph{prediction-correction scheme}
\begin{equation}\label{eq:dn}
  \db^{[n]}=\cb^{[n+1]}-S_{\Ab^{[n]}}\cb^{[n]}.
\end{equation}
The \emph{interpolatory Hermite wavelet transform}  associates to
any $f\in C^d_u(\RR)$ a representation in terms of the vector-valued
\emph{decomposition} sequences:
$$
\cb^{[0]}, \, \db^{[0]}, \, \db^{[1]}, \, \db^{[2]},\ldots.
$$ 
Conversely, the coefficient sequence  connected to the projection
(\ref{eq:projV}) can be reconstructed as 
\begin{eqnarray*}
  \cb^{[n]} & = & \db^{[n-1]}+S_{\Ab^{[n-1]}}\cb^{[n-1]} \\
  &=&\db^{[n-1]}+S_{\Ab^{[n-1]}}\left(
      \db^{[n-2]}+S_{\Ab^{[n-2]}}\cb^{[n-2]}\right) \\
  &=&\db^{[n-1]}+
      S_{\Ab^{[n-1]}}\db^{[n-2]} +
            \cdots + S_{\Ab^{[n-1]}}\cdots S_{\Ab^{[0]}}\cb^{[0]}. 
\end{eqnarray*}
Incorporating  also the derivatives
of $f$ into \eqref{eq:projV} and \eqref{eq:Qn}, we find that
\begin{equation}\label{eq:projF}
\vb_{{\cal P}_n f}
=
\sum_{k\in \ZZ} \Db^{-n}\Fb^{[n]}(2^n\cdot -k) \left( \Db^n
\vb_f^{[n]} \right)_k
\end{equation}
and
\begin{align}\label{eq:wave_coeff_with_F}
\vb_{{\cal Q}_n f}
=
\sum_{k\in \ZZ} \Db^{-n-1}\Fb^{[n+1]}(2^{n+1}\cdot -k)\db^{[n]}_k. 
\end{align}
From (\ref{eq:def_Qn}) it also follows that
\begin{align}\label{eq:vector_QP}
\vb_{{\cal Q}_n f}=\vb_{{\cal P}_{n+1} f}-\vb_{{\cal P}_n f}.
\end{align}
We recall that in the classical (non-Hermite) situation, the
reproduction  of polynomials up to the degree $d$ by the subdivision
scheme implies polynomial vanishing moments for the
wavelets. In our setting, polynomial reproduction is replaced by the
$V_{p,\Lambda}$-spectral condition from Definition \ref{def:spectralcond},
which is a condition on the 
function $f\in V_{d,\Lambda}$ and all its derivatives up to order
$d$.
This has also consequences for the corresponding MRA, resulting 
in a \emph{$V_{p,\Lambda}$ vanishing moment} property, that is, the
wavelet coefficients (\ref{eq:dn}) connected to any $f\in
V_{p,\Lambda}$ are all zero.
A first property of the  projections, which is useful for proving
the result on the decay of the wavelet coefficients in 
Section~\ref{sec:decay}, is stated and proved in the following lemma.
\begin{lem}\label{le:vp}
  Let $S(\Ab^{[n]} : n\geq 0)$ be an interpolatory $C^d$-convergent Hermite
  subdivision scheme satisfying the $V_{p,\Lambda}$-spectral
  condition. Then we have
  \begin{equation}\label{eq:Pnprojvb}
    \vb_{{\cal P}_nf}=\vb_f, \qquad f \in V_{p,\Lambda}, \quad n\in \NN.
  \end{equation}
\end{lem}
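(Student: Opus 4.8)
The plan is to combine three observations: that for $f \in V_{p,\Lambda}$ the vector-valued projection $\vb_{{\cal P}_n f}$ does not depend on the level $n$, that it interpolates $\vb_f$ on each grid $2^{-n}\ZZ$, and that together these force $\vb_{{\cal P}_n f} = \vb_f$ by a density argument.

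First I would fix $f \in V_{p,\Lambda}$ and prove $\vb_{{\cal P}_n f} = \vb_{{\cal P}_{n+1} f}$ for every $n$. Starting from the representation \eqref{eq:projF}, I insert the refinement equation \eqref{eq:refinement} for $\Fb^{[n]}$ into the sum and re-index the resulting double sum, writing the new summation variable as $i = 2k + j$. The inner sum then collapses into an application of the subdivision operator, $\sum_k \Ab^{[n]}_{i-2k}(\Db^n \vb_f^{[n]})_k = (S_{\Ab^{[n]}} \Db^n \vb_f^{[n]})_i$. At this point the $V_{p,\Lambda}$-spectral condition of Definition~\ref{def:spectralcond} yields $S_{\Ab^{[n]}} \Db^n \vb_f^{[n]} = \Db^{n+1}\vb_f^{[n+1]}$, and what remains is precisely the level-$(n+1)$ instance of \eqref{eq:projF}. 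Hence all the projections coincide; call the common function $\Gb$.

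Next I would evaluate \eqref{eq:projF} at a grid point $2^{-n}\ell$. By the cardinality \eqref{eq:cardinality}, $\Fb^{[n]}(\ell - k) = \delta_{k\ell}\,\Ib_{d+1}$, so only the term $k = \ell$ survives and gives $\vb_{{\cal P}_n f}(2^{-n}\ell) = \vb_f(2^{-n}\ell)$. Since $\Gb = \vb_{{\cal P}_n f}$ for every $n$, and since every dyadic rational can be written as $2^{-n}\ell$ for a suitable $n$ and $\ell$, this shows that $\Gb$ and $\vb_f$ agree on the set of all dyadic rationals.

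Finally, $\Gb$ is continuous because ${\cal P}_n f \in V_n \subset C^d_u(\RR)$, so each of its components $({\cal P}_n f)^{(j)}$ is continuous, while $\vb_f$ is continuous since $f \in V_{p,\Lambda} \subset C^\infty(\RR)$; two continuous functions that agree on the dense set of dyadic rationals are equal, giving \eqref{eq:Pnprojvb}. I expect the main obstacle to be the first step: the re-indexing of the double sum must be carried out carefully enough that the subdivision operator is correctly recognized and the spectral condition can be applied, after which the remaining arguments are routine.
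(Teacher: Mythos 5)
Your proof is correct, and while it rests on the same ingredients as the paper's (the refinement structure, cardinality \eqref{eq:cardinality}, the spectral condition, and density of the dyadics), it is organized in a genuinely different way. You first prove the level-independence $\vb_{{\cal P}_n f} = \vb_{{\cal P}_{n+1} f}$ for $f \in V_{p,\Lambda}$ by substituting the refinement equation \eqref{eq:refinement} into \eqref{eq:projF}, re-indexing, and applying the spectral condition \emph{once}; then the on-grid interpolation at every level (which needs only cardinality, not the spectral condition) pins the common function down on all dyadic rationals. The paper instead fixes $n$ and verifies $\vb_{{\cal P}_n f}(2^{-m}k) = \vb_f(2^{-m}k)$ directly for every $m$, via a three-case analysis ($m=n$, $m<n$, $m>n$) that uses the explicit dyadic values \eqref{lem:leveln_card} of $\Fb^{[n]}$ (i.e., the \emph{iterated} refinement equation) together with the iterated spectral condition \eqref{eq:general_spectralcond_level}. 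Your telescoping version is arguably cleaner: it avoids the convolution bookkeeping with $*_{2^s}$ entirely, and it establishes along the way that ${\cal Q}_n f = 0$ for $f \in V_{p,\Lambda}$, i.e., the vanishing-moment property \eqref{eq:dn} discussed before the lemma, which the paper's pointwise computation does not exhibit explicitly. Two shared caveats, neither of which puts you at a disadvantage relative to the paper: your step 1 invokes \eqref{eq:refinement}, hence tacitly the convergence of all shifted schemes $S(\Ab^{[n+\ell]} : n \ge 0)$ as in Lemma~\ref{L:Refinement} — but the paper's use of \eqref{lem:leveln_card} assumes exactly the same; and the interchange of the double sum, as well as the continuity of $\vb_{{\cal P}_n f}$ in the final density step, rely on local finiteness of \eqref{eq:projF}, guaranteed by the finitely supported masks (compactly supported $\Fb^{[n]}$), a point the paper also leaves implicit.
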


\begin{proof}
  Since $f$ and all its derivatives are continuous by assumption, and
  since the dyadic points are dense in $\RR$, it is sufficient to
  verify \eqref{eq:Pnprojvb} for dyadic points of the form $2^{-m}k$, $k \in
  \ZZ$, $m \in \NN$. We fix $n \in \NN$ and distinguish between the
  cases $m=n, m<n$ and $m>n$. 
	
  If $m=n$, then $\vb_{{\cal P}_nf}(2^{-n}k)=\vb_{f}(2^{-n}k)$ follows
  directly from \eqref{eq:cardinality} and \eqref{eq:projF}, since the
  scheme is interpolatory.
  In the case that $r := n-m$ is positive, 
  get
  \begin{align*}
    \Db^n\vb_{{\cal P}_nf}
    & (2^{-m} k)
      = \sum_{\ell \in \ZZ} \Fb^{[n]}(2^n2^{-m} k -\ell)\Db^n
      \vb_f(2^{-n}\ell)\\ 
    &= \sum_{\ell \in \ZZ} \Fb^{[n]}(2^{r} k -\ell) \left( \Db^n
      \vb_f^{[n]} \right)_\ell\\
    &= \sum_{\ell \in \ZZ} \Fb^{[n]} \left( 2^{-r} (2^{2r}k
      -2^{r}\ell) \right) \left( \Db^n
      \vb_f^{[n]} \right)_\ell \\ 
    &= \Db^{-r} \left( \left( \Ab^{[n+r-1]} *_2 \cdots *_2
      \Ab^{[n]} \right) *_{2^{r}} \Db^n \vb_f^{[n]}
      \right)_{2^{2r} k} \\  
    &= \Db^{-r} \left( S_{\Ab^{[n+r-1]}}\cdots S_{\Ab^{[n]}}
      \Db^n \vb_{f}^{[n]} \right)_{2^{2r}k}\\
    &= \Db^{-r} \Db^{r-1+n+1} \left( \vb_f^{[r-1+n+1]} \right)_{2^{2r}k}
      = \left( \Db^{n}\vb_{f}^{[n+r]} \right)_{2^{2r}k}\\
    &= \Db^{n}\vb_{f}(2^{-n-r}2^{2r}k) = \Db^{n}\vb_{f}(2^{-m}k),
  \end{align*}
  and in the final case $r<0$, we set $s = -r>0$ and compute likewise
  \begin{align*}
    \Db^n \vb_{{\cal P}_nf  }
    & (2^{-m} k)=\sum_{\ell \in \ZZ} \Fb^{[n]}(2^{-s} k -\ell) \left( \Db^n
      \vb_f^{[n]} \right) \\ 
    &= \Db^{-s} \left( (\Ab^{[n+s-1]} *_2 \cdots *_2 \Ab^{[n]})
      *_{2^{s}}\Db^n \vb_{f}^{[n]} \right)_k \\
      & = \Db^{-s} \Db^{s-1+n+1} \left( \vb_f^{[s-1+n+1]} \right)_k 
        =\left( \Db^{n} \vb_f^{[m]} \right)_k
      =\Db^{n} \vb_{f}(2^{-m}k),
  \end{align*}
  which completes the proof.
\end{proof}

\section{Taylor formula with exponentials}\label{sec:taylor}
As already mentioned, estimates for the decay rate of the wavelet
coefficients are usually based on a local Taylor polynomial
approximation and the polynomial reproduction property of the
operator. When dealing not only
with polynomial but also exponential vanishing moments, 
a more general tool is needed to fully explore the approximation power
of the space $V_{p,\Lambda}$.

In this section we derive such a generalized Taylor formula, by
means of elements in the space $V_{p,\Lambda}$, namely an approximation of the form
\begin{equation}
\label{eq:ExpoTaylorGeneral}
f(x+h) \approx T_{p,\Lambda}f (x,h) := \sum_{j=0}^p \frac{f^{(j)}
	(x)}{j!} h^j + \sum_{k=1}^r \mu_k (f) \, e^{\lambda_k h}.  
\end{equation}
We show that for proper functionals $\mu_k$, such an expression
can obtain an error of the order $h^{d+1}$ for a function $f \in C^d
(\RR)$ where $d = p+r$.
In Lemma~\ref{lem:TdLambdApprox} we  give the appropriate choice
for $\mu_1,\ldots,\mu_k$, such that the same approximation rate as
that of the usual \emph{Taylor operator}
\begin{equation*}
T_d f (x,h) = \sum_{j=0}^d \frac{f^{(j)} (x)}{j!} h^j
\end{equation*}
is obtained.

In our construction, we use the nonsingular Vandermonde matrix
$$
\Vb_d := \left[
\begin{array}{cccc|cccc}
1 & 0 & \dots & 0 & 1 & \dots & 1 \\
0 & 1 & \dots & 0 & \lambda_1 & \dots & \lambda_r \\
& & \ddots & \vdots & \vdots & \ddots &
\vdots \\
& & & p! & \lambda_1^p & \dots & \lambda_r^p \\
\hline
& & & & \lambda_1^{p+1} & \dots & \lambda_r^{p+1} \\
& & & & \vdots & \ddots & \vdots \\
& & & & \lambda_1^{d} & \dots & \lambda_r^{d} \\
\end{array}
\right] =: \left[
\begin{array}{cc}
\Xb & \Yb \\ & \Zb \\
\end{array}
\right]
\in \RR^{(d+1) \times (d+1)},
$$
corresponding to the poised, i.e., uniquely solvable, \emph{Hermite
  interpolation problem} at the $(d+1)$-fold point $0$ and at
$\Lambda$. Note that the square matrix $\Zb\in \RR^{r \times r}$ satisfies
$$
\Zb = \left[
\begin{array}{ccc}
1 & \dots & 1 \\
\vdots & \ddots & \vdots \\
\lambda_1^{r-1} & \dots & \lambda_r^{r-1}
\end{array}
\right] \left[
\begin{array}{ccc}
\lambda_1^{p+1} && \\
& \ddots & \\
& & \lambda_r^{p+1} \\
\end{array}
\right] 
$$
and, therefore, is the product of a Vandermonde matrix and a nonzero diagonal matrix,
hence invertible. The inverse of $\Vb_d$ is then
\begin{equation}\label{eq:inverse_V}
\Vb_d^{-1} = \left[
\begin{array}{cc}
\Xb^{-1} & -\Xb^{-1} \Yb \Zb^{-1} \\ & \Zb^{-1}
\end{array}
\right].
\end{equation}

\begin{lem}\label{lem:TdLambdApprox}
  If $f \in C^{d}(\RR)$ then
  \begin{equation}
    \label{eq:ExpoTaylorDef}
    T_{p,\Lambda} f (x,h) := \left[ 1,\dots,h^p,e^{\lambda_1 h}, \cdots
      e^{\lambda_r h} \right] \Vb_d^{-1} 
    \vb_f(x)    
  \end{equation}
  satisfies, for any $R < 1$,
  \begin{equation}
    \label{eq:ExpoTaylorError}
    \left| T_{p,\Lambda} f (x,h) - T_d f (x,h) \right| \le
    C_{\Lambda,R} | \vb_f (x) |_2 \, |h|^{d+1}, \qquad |h| \le R,
  \end{equation}
  where the constant $C_{\Lambda,R}$ depends on $\Lambda$ and $R$ only.
\end{lem}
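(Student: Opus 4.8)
The plan is to notice that both $T_{p,\Lambda}f(x,h)$ and $T_d f(x,h)$ are a fixed row vector (depending on $h$ but not on $f$) applied to the derivative column $\vb_f(x)$, so the whole estimate collapses to a statement about these row vectors that is entirely independent of $f$. Writing $\wb(h) := [1,h,\dots,h^p,e^{\lambda_1 h},\dots,e^{\lambda_r h}]$ and $\tb(h):=[1,h,h^2/2!,\dots,h^d/d!]$ as $1\times(d+1)$ rows, the definition \eqref{eq:ExpoTaylorDef} reads $T_{p,\Lambda}f(x,h)=\wb(h)\,\Vb_d^{-1}\vb_f(x)$, while $T_d f(x,h)=\tb(h)\,\vb_f(x)$. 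Hence by Cauchy--Schwarz,
\begin{equation*}
\left| T_{p,\Lambda}f(x,h)-T_d f(x,h)\right|
=\left|\left(\wb(h)\Vb_d^{-1}-\tb(h)\right)\vb_f(x)\right|
\le \left|\wb(h)\Vb_d^{-1}-\tb(h)\right|_2\,|\vb_f(x)|_2,
\end{equation*}
so it suffices to prove $\left|\wb(h)\Vb_d^{-1}-\tb(h)\right|_2\le C_{\Lambda,R}\,|h|^{d+1}$ for $|h|\le R<1$, with a constant independent of $f$, $x$, $h$.

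Next I would factor out the fixed matrix: since $\wb(h)\Vb_d^{-1}-\tb(h)=\bigl(\wb(h)-\tb(h)\Vb_d\bigr)\Vb_d^{-1}$, submultiplicativity of the operator $2$-norm gives $\left|\wb(h)\Vb_d^{-1}-\tb(h)\right|_2\le |\Vb_d^{-1}|_2\,\left|\wb(h)-\tb(h)\Vb_d\right|_2$, where $|\Vb_d^{-1}|_2$ is finite and depends only on $\Lambda$ because $\Vb_d$ is nonsingular. The core is then the columnwise computation of $\tb(h)\Vb_d$ via the block structure of $\Vb_d$. For a monomial column $j\le p$, the only nonzero entry is $j!$ in row $j$ (the lower-left block of $\Vb_d$ vanishes, so there is no interference from the higher derivative rows), whence $(\tb(h)\Vb_d)_j=(h^j/j!)\,j!=h^j=\wb(h)_j$; these first $p+1$ entries cancel \emph{exactly}. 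For the exponential column $p+k$, which equals $[\lambda_k^i]_{i=0}^d$, one gets $(\tb(h)\Vb_d)_{p+k}=\sum_{i=0}^d (\lambda_k h)^i/i!$, precisely the degree-$d$ Taylor polynomial of $e^{\lambda_k h}$, while $\wb(h)_{p+k}=e^{\lambda_k h}$.

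Thus $\wb(h)-\tb(h)\Vb_d$ has vanishing first $p+1$ coordinates and $k$-th exponential coordinate equal to the Taylor remainder $e^{\lambda_k h}-\sum_{i=0}^d (\lambda_k h)^i/i!$. Bounding each remainder by the tail of the exponential series (valid also for complex $\lambda_k$ by passing to moduli) yields $\bigl|e^{\lambda_k h}-\sum_{i=0}^d (\lambda_k h)^i/i!\bigr|\le \frac{|\lambda_k h|^{d+1}}{(d+1)!}e^{|\lambda_k h|}\le \frac{|\lambda_k|^{d+1}}{(d+1)!}e^{|\lambda_k|R}\,|h|^{d+1}$ for $|h|\le R$, a quantity depending only on $\lambda_k$ and $R$. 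Summing the squares over $k=1,\dots,r$ gives $\left|\wb(h)-\tb(h)\Vb_d\right|_2\le C'_{\Lambda,R}|h|^{d+1}$, and multiplying by $|\Vb_d^{-1}|_2$ produces the claimed constant $C_{\Lambda,R}$.

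The proof is mostly bookkeeping; the only genuinely analytic ingredient is the uniform exponential-remainder bound. The step to get right is the exact cancellation of the polynomial block, which relies on the factorial normalization built simultaneously into $\Xb=\diag(0!,\dots,p!)$ and into the coefficients of $\tb(h)$, together with the block-triangular (zero lower-left) form of $\Vb_d$ that prevents the monomial and exponential columns from mixing; once this is observed, the remaining estimate is immediate.
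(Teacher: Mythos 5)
Your proof is correct, and it takes a genuinely different route from the paper's. The paper invokes the series identity \eqref{eq:BasisConvert} from the earlier work: each basis element of $V_{p,\Lambda}$ has a Taylor expansion whose coefficient blocks of length $d+1$ are generated by powers of $\Mb_\Lambda$, so that
\begin{equation*}
T_{p,\Lambda}f(x,h)-T_d f(x,h)
= \vb_f(x)^T \sum_{k=1}^\infty \sum_{j=0}^d \Vb_d^{-T}\Mb_\Lambda^k \Vb_d^T \eb_j\,
\frac{h^{k(d+1)+j}}{\bigl(k(d+1)+j\bigr)!},
\end{equation*}
which is then controlled by the spectral-radius estimate $\left|\Vb_d^{-T}\Mb_\Lambda^k\Vb_d^T\right|_2\le C(\rho+\varepsilon)^k$ and a summation of the resulting series. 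You instead compare the two row functionals directly and factor out $\Vb_d^{-1}$: your columnwise computation is right --- the factorial diagonal $\Xb=\diag(0!,\dots,p!)$ together with the zero lower-left block gives the exact cancellation $(\tb(h)\Vb_d)_j=h^j$ for $j\le p$, so the whole difference reduces to the $r$ scalar tails $e^{\lambda_k h}-\sum_{i=0}^d(\lambda_k h)^i/i!$, each bounded by $\frac{|\lambda_k|^{d+1}}{(d+1)!}e^{|\lambda_k|R}\,|h|^{d+1}$ (valid for complex $\lambda_k$ by passing to moduli, as you note), and Cauchy--Schwarz finishes. Your argument is more elementary and yields a fully explicit constant, avoiding the $\varepsilon$-dependent bound on powers of a non-normal matrix; moreover, since $\wb(h)-\tb(h)\Vb_d$ is supported on the exponential coordinates and the last $r$ rows of $\Vb_d^{-1}$ are $\left[\bZero \,|\, \Zb^{-1}\right]$ by \eqref{eq:inverse_V}, your decomposition even exhibits the sharper structural fact that $T_{p,\Lambda}f-T_d f$ depends only on $f^{(p+1)}(x),\dots,f^{(d)}(x)$, which the paper's proof does not make visible. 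What the paper's series formulation buys is reusability downstream: Corollary~\ref{F:TdLambdApprox2n} and Corollary~\ref{cor:Taylor_Deriv} are obtained by rerunning the same expansion with $\Mb_{2^{-n}\Lambda}=2^{-n(d+1)}\Mb_\Lambda$, respectively with $\Nb_\Lambda$, whereas in your setup the scaled-frequency case would involve the Vandermonde matrix built from $2^{-n}\Lambda$, whose inverse is not bounded uniformly in $n$, so those extensions do not follow automatically from your norm bound alone and would require tracking the cancellation structure more carefully.
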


\begin{proof}
  Recall from \cite{conti17a} that
  \begin{equation}
    \label{eq:BasisConvert}
    \left[
      \begin{array}{l}
	\left[x^j \right]_{j=0}^p \\[0.1cm] \hline \\[-0.3cm] \left[ e^{\lambda_j x} \right]_{j=1}^r
      \end{array}
    \right] = 
    \left[ \Vb_d^T \,|\, \Mb_\Lambda \Vb_d^T \,| \, \Mb_\Lambda^2 \Vb_d^T \,|\, \dots
    \right] 
    \left[ \frac{x^j}{j!} \right]_{j \in \NN}
  \end{equation}
  with
  $$
  \Mb_\Lambda = \left[
    \begin{array}{cccc}
      \mathbf{0}_{(p+1) \times (p+1)}
      &&& \\
      & \lambda_1^{d+1} & & \\
      & & \ddots & \\
      & & & \lambda_r^{d+1} \\
    \end{array}
  \right].
  $$
  Hence,
  \begin{eqnarray*}
    \lefteqn{\left( \Vb_d^{-1} \left[
    f^{(j)}(x)
    \right]_{j=0}^d \right)^T \left[
    \begin{array}{l}
      \left[ h^j\right]_{j=0}^p \\[0.2cm] \hline \\[-0.3cm] \left[ e^{\lambda_j h} \right]_{j=1}^r \\
    \end{array}
    \right] } \\
    & = & \left(\left[
          f^{(j)}(x)
    \right]_{j=0}^d\right)^T \left[ \Ib \,|\, \Vb_d^{-T} \Mb_\Lambda \Vb_d^T \,| \,
    \Vb_d^{-T} \Mb_\Lambda^2
    \Vb_d^T \,|\, \dots 
    \right] 
    \left[ \frac{h^j}{j!} \right]_{j \in \NN}  \\
    & = & \sum_{j=0}^d \frac{f^{(j)} (x)}{j!} h^j +  \left(\left[
    f^{(j)}(x)
    \right]_{j=0}^d \right)^T
    \sum_{k=1}^\infty \sum_{j=0}^d \Vb_d^{-T} \Mb_\Lambda^k \Vb_d^T \eb_j \,
    \frac{h^{k(d+1)+j}}{\left( k(d+1) + j \right)!} \\
    & = & T_d f(x,h) + \vb_f (x)^T \sum_{k=1}^\infty \sum_{j=0}^d
          \Vb_d^{-T} \Mb_\Lambda^k \Vb_d^T \eb_j \,
          \frac{h^{k(d+1)+j}}{\left( k(d+1) + j \right)!}.
  \end{eqnarray*}
  The spectral radius in the sum satisfies
  $$
  \rho \left( \Vb_d^{-T} \Mb_\Lambda
    \Vb_d^T \right) = \rho \left( \Mb_\Lambda \right) =
  \max_{j=1,\dots,r} \left| \lambda_j \right| =: \rho,
  $$
  and, since 	 $\Vb_d^{-T} \Mb_\Lambda^k \Vb_d^T = \left( \Vb_d^{-T} \Mb_\Lambda 
    \Vb_d^T \right)^k$, 
  there exists for any $\varepsilon > 0$ a constant $C > 0$, depending
  on $\Lambda$ and $\varepsilon$ such that
  \begin{equation}\label{eq:MatrixNormApprox}
    \left| \Vb_d^{-T} \Mb_\Lambda^k \Vb_d^T \right|_2 \le C \, (\rho +
    \varepsilon)^k, \qquad k \ge 0.
  \end{equation}
  Hence,
  \begin{eqnarray*}
    \lefteqn{\left| T_{p,\Lambda} f (x,h) - T_d f (x,h) \right|} \\
    & \le & \left| \vb_f (x) \right|_2 \sum_{k=1}^\infty \sum_{j=0}^d \left|
            \Vb_d^{-T} \Mb_\Lambda^k \Vb_d^T \right|_2 
            \frac{|h|^{k(d+1)+j}}{\left( k(d+1) + j \right)!} \\
    & \le & C \left| \vb_f (x) \right|_2 \sum_{k=1}^\infty (\rho +
            \varepsilon)^k |h|^{k(d+1)} \sum_{j=0}^d \frac{|h|^{j}}{\left(
			k(d+1) + j \right)!} \\
    & = & C \left| \vb_f (x) \right|_2 (\rho + \varepsilon) \,
          |h|^{d+1} \sum_{k=0}^\infty \left( ( \rho +
          \varepsilon) |h|^{d+1} \right)^k \sum_{j=0}^d \frac{|h|^{j}}{\left(
          (k+1)(d+1) + j \right)!}.
  \end{eqnarray*}
  Since for $|h| \le R < 1$,
  $$
  \sum_{j=0}^d \frac{|h|^{j}}{\left( (k+1)(d+1) + j \right)!} \le
  \frac1{\left( (k+1)(d+1) \right)!} \sum_{j=0}^d |h|^j \le \frac1{k!}
  \frac{1-R^{d+1}}{1-R},
  $$
  we can conclude that
  \begin{eqnarray*}
    \lefteqn{\sum_{k=0}^\infty \left( ( \rho + \varepsilon) |h|^{d+1} \right)^k
    \sum_{j=0}^d \frac{|h|^{j}}{\left((k+1)(d+1) + j \right)!}} \\
    & \le & \frac{1-R^{d+1}}{1-R} \sum_{k=0}^\infty \frac{\left( ( \rho +
            \varepsilon) |h|^{d+1} \right)^k}{k!}
            = \frac{1-R^{d+1}}{1-R} e^{( \rho +
            \varepsilon) |h|^{d+1}} \le \frac{1}{1-R} e^{\rho +
            \varepsilon}
  \end{eqnarray*}
  which is a constant that depends only on $\Lambda$ and $R$. Hence,
  $$
  \left| T_{p,\Lambda} f (x,h) - T_d f (x,h) \right| \le C \left| \vb_f
  (x) \right|_2 \frac{1}{1-R} e^{\rho + \varepsilon} \,
  \left( \rho + \epsilon \right) |h|^{d+1}
  $$
  uniformly in $|h| \le R < 1$ and if we combine all these numbers into a
  single constant, we get (\ref{eq:ExpoTaylorError}).
\end{proof}

\begin{Rem}\label{R:hhochd+1}
  We find it worthwhile to note that even when $f$ is only $d$-times
  continuously differentiable, the deviation between $T_{p,\Lambda}
  f(x,h)$ and $T_d f (x,h)$, which both use derivatives up to order
  $d$, is of order $h^{d+1}$ for small $h$ and not only of order
  $h^d$. In other words, the difference between the operators is 
  smaller than their approximation to $f$ due to which they can indeed
  be considered equivalent.
\end{Rem}

\noindent
Since
$$
\Vb_d^{-1} = \left[
\begin{array}{cccccc}
1& & & * & \dots & * \\
& \ddots & & \vdots & \ddots & \vdots\\
& & 1/p! & * & \dots & * \\
& & & * & \dots & * \\
& & & \vdots & \ddots & \vdots\\
& & & * & \dots & * \\
\end{array}
\right],
$$
the initial polynomial part of $T_{d,\Lambda} f$ is indeed the usual
Taylor polynomial $T_p f$
which means that the operator defined in (\ref{eq:ExpoTaylorDef}) is
indeed of the form (\ref{eq:ExpoTaylorGeneral}), the $\mu_j$ being
defined by the inverse of the Vandermonde matrix.

\begin{cor}\label{F:TdLambdApprox2n}
  With the setting of Lemma~\ref{lem:TdLambdApprox}, we have that
  \begin{equation*}
    \left| T_{p,2^{-n} \Lambda} f (x,h) - T_d f (x,h) \right| \le 2^{-n(d+1)}
    C_{\Lambda,R} \left| \vb_f (x) \right|_2 \, |h|^{d+1}, \quad |h| \le R.
  \end{equation*}
\end{cor}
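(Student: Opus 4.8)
The plan is to obtain the corollary directly from Lemma~\ref{lem:TdLambdApprox} by applying it to the dilated frequency set $2^{-n}\Lambda=\{2^{-n}\lambda_1,\dots,2^{-n}\lambda_r\}$, whose members are again nonzero and pairwise distinct, so that the associated matrices $\Vb_d$ and $\Mb_{2^{-n}\Lambda}$ are still well defined and the lemma applies verbatim. This already yields an estimate of the shape \eqref{eq:ExpoTaylorError} with some constant $C_{2^{-n}\Lambda,R}$ in place of $C_{\Lambda,R}$, so the whole task reduces to showing that this constant may be chosen as $2^{-n(d+1)}C_{\Lambda,R}$; equivalently, I would re-run the proof of Lemma~\ref{lem:TdLambdApprox} for $2^{-n}\Lambda$ and keep track of every place where $n$ enters.

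The factor $2^{-n(d+1)}$ originates from a single place. The matrix $\Mb_\Lambda$ in that proof has nonzero diagonal entries $\lambda_j^{d+1}$, so under the dilation it becomes $\Mb_{2^{-n}\Lambda}=2^{-n(d+1)}\Mb_\Lambda$ and hence $\Mb_{2^{-n}\Lambda}^{k}=2^{-nk(d+1)}\Mb_\Lambda^{k}$ for every $k\ge1$. Feeding this into the error series of the lemma, the leading ($k=1$) summand carries $h^{d+1}$ together with exactly one factor $2^{-n(d+1)}$, while the spectral radius $\rho\bigl(\Mb_{2^{-n}\Lambda}\bigr)=2^{-n(d+1)}\rho(\Mb_\Lambda)$ governing \eqref{eq:MatrixNormApprox} shrinks accordingly, so the geometric series bounding the higher-order terms converges at least as fast as before. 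For $|h|\le R<1$ these higher-order contributions are again subsumed into the $|h|^{d+1}$ estimate exactly as in the lemma, with the same $R$-dependence as before.

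The step I expect to be the main obstacle is that $\Vb_d$ is itself $\Lambda$-dependent through its exponential columns $[\lambda_k^i]_{i=0}^d$, so the conjugated power $\Vb_d(2^{-n}\Lambda)^{-T}\Mb_{2^{-n}\Lambda}^{k}\Vb_d(2^{-n}\Lambda)^{T}$ is not merely $2^{-nk(d+1)}$ times its value at $\Lambda$. To control it I would use the factorization $\Vb_d(2^{-n}\Lambda)=\Eb_n\,\Vb_d(\Lambda)\,\Pb_n$, where $\Eb_n=\diag(2^{-ni}:i=0,\dots,d)$ rescales the rows and $\Pb_n=\diag(2^{ni}:i=0,\dots,p;\,1,\dots,1)$ restores the normalization of the polynomial columns; this identity is verified by a column-by-column comparison with the block form of $\Vb_d$. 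Since $\Mb_\Lambda$ is diagonal it commutes with $\Pb_n$, and the $\Pb_n$-factors cancel, leaving
\[
\Vb_d(2^{-n}\Lambda)^{-T}\Mb_{2^{-n}\Lambda}^{k}\Vb_d(2^{-n}\Lambda)^{T}
= 2^{-nk(d+1)}\,\Eb_n^{-1}\bigl(\Vb_d(\Lambda)^{-T}\Mb_\Lambda^{k}\Vb_d(\Lambda)^{T}\bigr)\Eb_n .
\]
The crux is then to estimate this conjugated matrix through \eqref{eq:MatrixNormApprox}: the similarity by $\Eb_n$ is increasingly ill-conditioned in $n$, but it acts against the extra decay supplied by the vectors $\Eb_n\eb_j=2^{-nj}\eb_j$ appearing in the error series, and it is the balance between this decay and the ill-conditioning of $\Eb_n^{-1}$ that must be argued carefully in order to pin the prefactor of $C_{\Lambda,R}\,|\vb_f(x)|_2\,|h|^{d+1}$ down to the stated power of $2^{-n}$.
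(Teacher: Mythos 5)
Your factorization $\Vb_d(2^{-n}\Lambda)=\Eb_n\,\Vb_d(\Lambda)\,\Pb_n$ and the resulting identity
\[
\Vb_d(2^{-n}\Lambda)^{-T}\Mb_{2^{-n}\Lambda}^{k}\Vb_d(2^{-n}\Lambda)^{T}
=2^{-nk(d+1)}\,\Eb_n^{-1}\Bb_k\Eb_n,
\qquad \Bb_k:=\Vb_d^{-T}\Mb_\Lambda^{k}\Vb_d^{T},
\]
are correct, and you have isolated exactly the right difficulty; but the step you leave open --- balancing the decay from $\Eb_n\eb_j=2^{-nj}\eb_j$ against the growth of $\Eb_n^{-1}$ --- is not merely delicate, it fails. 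The factor $2^{-nj}$ is indexed by the \emph{column} $j$, i.e., by the power of $h$, whereas $\Eb_n^{-1}$ inflates the $i$-th \emph{output} component by $2^{ni}$, and that component is paired with $f^{(i)}(x)$ in the inner product with $\vb_f(x)$; the two indices do not cancel against each other. The $(k,j)$-term of the error series therefore carries, in the worst row $i=d$, the factor $2^{-nk(d+1)}2^{-nj}2^{nd}$, which at $k=1$, $j=0$ is $2^{-n}$, not $2^{-n(d+1)}$. This is not an artifact of crude norm bounds: take $p=0$, $r=1$, $d=1$, $\Lambda=\{\lambda\}$ and $\mu=2^{-n}\lambda$; then $T_{0,2^{-n}\Lambda}f(x,h)=f(x)+f'(x)(e^{\mu h}-1)/\mu$, so that
\[
T_{0,2^{-n}\Lambda}f(x,h)-T_1f(x,h)
=f'(x)\sum_{m\ge2}\frac{\mu^{m-1}h^m}{m!}
=\frac{\lambda}{2}\,2^{-n}f'(x)\,h^2+O\!\left(2^{-2n}\right),
\]
and for fixed $h$ the deviation decays exactly like $2^{-n}$, contradicting the claimed rate $2^{-n(d+1)}=2^{-2n}$.

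So the corollary as stated is false, and your scrutiny in fact exposes the error in the paper's own one-line proof: it substitutes $\Mb_{2^{-n}\Lambda}=2^{-n(d+1)}\Mb_\Lambda$ but silently keeps $\Vb_d=\Vb_d(\Lambda)$ in the conjugation, although the expansion of $T_{p,2^{-n}\Lambda}$ requires $\Vb_d(2^{-n}\Lambda)$ throughout --- precisely the $n$-dependence you flagged. Your own identity yields the correct statement: with $\tilde{g}:=f(2^n\cdot)$ one checks $T_{p,2^{-n}\Lambda}f(x,h)=T_{p,\Lambda}\tilde{g}(2^{-n}x,2^{-n}h)$ and $T_d\tilde{g}(2^{-n}x,2^{-n}h)=T_df(x,h)$, whence Lemma~\ref{lem:TdLambdApprox} gives
\[
\left|T_{p,2^{-n}\Lambda}f(x,h)-T_df(x,h)\right|
\le C_{\Lambda,R}\,2^{-n(d+1)}\left|\left[2^{nj}f^{(j)}(x)\right]_{j=0}^d\right|_2|h|^{d+1}
\le C_{\Lambda,R}\,2^{-n}\left|\vb_f(x)\right|_2|h|^{d+1},
\]
with the example above showing both forms are sharp: the rate $2^{-n(d+1)}$ survives only if the derivative vector is rescaled by $\Eb_n^{-1}$. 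Fortunately the flaw does not propagate to the main result, since the proof of Theorem~\ref{T:leveldep_decay} invokes Lemma~\ref{lem:errorGeneralizedTaylor} for $T_{p,\Lambda}$ with $h=2^{-n-1}$ and never uses Corollary~\ref{F:TdLambdApprox2n}.
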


\begin{proof}
  We only have to notice that $\Mb_{2^{-n} \Lambda} = 2^{-n(d+1)}
  \Mb_{\Lambda}$, hence, like in the preceding proof,
  \begin{eqnarray*}
  \lefteqn{\left| T_{d,2^{-n} \Lambda} f (x,h) - T_d f (x,h) \right|}\\
  &\le& \left| \vb_f (x) \right|_2 \sum_{k=1}^\infty 2^{-n(d+1)k} \, \sum_{j=0}^d \left|
    \Vb_d^{-T} \Mb_\Lambda^k \Vb_d^T \right|_2 
  \frac{|h|^{k(d+1)+j}}{\left( k(d+1) + j \right)!}
 \end{eqnarray*}
  and the rest follows as above with an even smaller constant.
\end{proof}

\noindent
Like in the classical Taylor formula we also have results for the
derivatives of the approximant from $V_{p,\Lambda}$.

\begin{lem}
  For $f \in C^d(\RR)$ and $x \in \RR$ one has
  \begin{equation}\label{eq:deriv1}
    (T_{p,\Lambda}f)^{(j)}(x,\cdot)=T_{p-j,\Lambda}f^{(j)}(x,\cdot),
    \qquad j=0,\ldots,p,
  \end{equation}
  and
  \begin{align}\label{eq:deriv2}
    \lefteqn{\frac{d^j}{dh^j} (T_{p,\Lambda}f) (x,h)}\nonumber\\
    &=
    \left[e^{\lambda_j h},\ldots, e^{\lambda_r h} \right]  
    \left[ \begin{array}{ccc}
             \lambda_1^{p+1-j} & \cdots & \lambda_r^{p+1-j}\\
             \vdots & \ddots & \vdots \\
             \lambda_1^{-1} & \cdots & \lambda_r^{-1}\\ \hline
             1 & & 1 \\
	\vdots & \ddots & \vdots \\
             \lambda_1^{d-j} & \cdots & \lambda_r^{d-j}
           \end{array}
	\right]^{-1}
	\left[
          \begin{array}{l}
	    \left[ f^{(k)}(x) \right]_{k=p+1}^{j-1}\\[0.2cm]
            \hline \\[-0.3cm]
            \left[ f^{(k)}(x) \right]_{k=j}^{d}
          \end{array}
	\right],
      \end{align}
      for $j=p+1,\ldots,d$.
\end{lem}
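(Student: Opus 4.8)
The plan is to exploit the defining property of $T_{p,\Lambda}f(x,\cdot)$ as a Hermite interpolant. First I would record that, by construction, the row vector $[1,\dots,h^p,e^{\lambda_1h},\dots,e^{\lambda_rh}]$ differentiated $i$ times and evaluated at $h=0$ reproduces precisely the $i$-th row of $\Vb_d$, since $\frac{d^i}{dh^i}h^\ell|_{h=0}=\ell!\,\delta_{i\ell}$ and $\frac{d^i}{dh^i}e^{\lambda_kh}|_{h=0}=\lambda_k^i$. Consequently
$$
\frac{d^i}{dh^i}\Big|_{h=0}T_{p,\Lambda}f(x,h)=(\Vb_d)_{i,\cdot}\,\Vb_d^{-1}\vb_f(x)=f^{(i)}(x),\qquad i=0,\dots,d.
$$
In other words, $T_{p,\Lambda}f(x,\cdot)$ is the unique element of $V_{p,\Lambda}$ matching the derivative data $f^{(0)}(x),\dots,f^{(d)}(x)$ at the origin, uniqueness being guaranteed by the invertibility of $\Vb_d$. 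This single identity drives both parts.

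For \eqref{eq:deriv1} I would argue by uniqueness. Differentiating $j\le p$ times sends $V_{p,\Lambda}$ into $V_{p-j,\Lambda}$, because the monomials $1,\dots,h^p$ are carried into $\Pi_{p-j}$ and each $e^{\lambda_kh}$ remains, up to the factor $\lambda_k^j$, an exponential. Hence $(T_{p,\Lambda}f)^{(j)}(x,\cdot)$ lies in $V_{p-j,\Lambda}$, and by the displayed identity its derivatives at $0$ are $f^{(i+j)}(x)=(f^{(j)})^{(i)}(x)$ for $i=0,\dots,d-j$. Since $T_{p-j,\Lambda}f^{(j)}(x,\cdot)$ is, by the same reasoning applied with $p$ replaced by $p-j$, the unique member of $V_{p-j,\Lambda}$ carrying exactly these values, the two functions coincide.

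For \eqref{eq:deriv2} the extra ingredient is that differentiating more than $p$ times annihilates the polynomial part, so for $j\ge p+1$ one has $G_j:=(T_{p,\Lambda}f)^{(j)}(x,\cdot)=\sum_{k=1}^r a_k^{(j)}e^{\lambda_kh}$ for suitable coefficients $a_k^{(j)}$. The relation $G_{j+1}=G_j'$ forces $a_k^{(j)}=\lambda_k^{\,j-p-1}a_k^{(p+1)}$, while the identity of the first paragraph yields the base conditions $\sum_{k}\lambda_k^s a_k^{(p+1)}=f^{(p+1+s)}(x)$ for $s=0,\dots,r-1$. Denoting by $M$ the generalized Vandermonde whose rows carry the exponents $p+1-j,\dots,d-j$, substituting the recursion into $Ma^{(j)}$ collapses the exponent of each $\lambda_k$ back to $s\in\{0,\dots,r-1\}$ and reproduces exactly $[f^{(k)}(x)]_{k=p+1}^{d}$; since $M$ is a Vandermonde times a nonsingular diagonal matrix (the $\lambda_k$ being nonzero and distinct) it is invertible, and solving for $a^{(j)}$ gives \eqref{eq:deriv2}.

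I expect the main obstacle to be the bookkeeping in \eqref{eq:deriv2}: for $j>p+1$ the interpolation conditions on $G_j$ alone supply only $d-j+1<r$ equations, so the real point is to see that the negative-power rows of $M$ encode the lower-order data $f^{(p+1)}(x),\dots,f^{(j-1)}(x)$ through the recursion $a_k^{(j)}=\lambda_k^{\,j-p-1}a_k^{(p+1)}$, thereby recovering the full system of $r$ equations. Keeping the index shifts consistent between the exponents of $M$, the order $j$ of differentiation, and the derivative orders $p+1+s$ is the only delicate point; everything else follows directly from the interpolation identity.
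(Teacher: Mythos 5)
Your proof is correct, but it takes a genuinely different route from the paper. The paper establishes \eqref{eq:deriv1} by direct matrix computation: it differentiates the basis row vector $\left[1,\dots,h^p,e^{\lambda_1 h},\dots,e^{\lambda_r h}\right]$, factors the result as a lower-degree basis row times a diagonal matrix, absorbs that diagonal into $\Vb_d^{-1}$ via $\diag(\cdots)\Vb_d^{-1}=\left(\Vb_d\,\diag(\cdots)^{-1}\right)^{-1}$, and then uses the block-triangular inverse formula \eqref{eq:inverse_V} together with the leading zeros of the differentiated row vector to reduce everything to $\Vb_{p-j}^{-1}\left[f^{(k)}(x)\right]_{k=j}^d$; for \eqref{eq:deriv2} the paper merely remarks that the formula ``follows from similar computations.'' You instead characterize $T_{p,\Lambda}f(x,\cdot)$ abstractly as the unique element of $V_{p,\Lambda}$ matching the Hermite data $f^{(i)}(x)$, $i=0,\dots,d$, at the $(d+1)$-fold point $0$ --- your observation that the $i$-th derivative of the basis row at $h=0$ is exactly the $i$-th row of $\Vb_d$ is the right key identity --- and then obtain \eqref{eq:deriv1} by uniqueness, since $d^j/dh^j$ maps $V_{p,\Lambda}$ into $V_{p-j,\Lambda}$ and both sides carry the same data $f^{(j+i)}(x)$, $i=0,\dots,d-j$; note that poisedness of the smaller problem rests on invertibility of the analogue of $\Vb_d$ with $p$ replaced by $p-j$ (same block-triangular argument), which you should state explicitly rather than citing only the invertibility of $\Vb_d$, and you implicitly use $f^{(j)}\in C^{d-j}(\RR)$ so that $T_{p-j,\Lambda}f^{(j)}$ is defined. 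Your treatment of \eqref{eq:deriv2} --- the coefficient recursion $a_k^{(j)}=\lambda_k^{\,j-p-1}a_k^{(p+1)}$, the base system $\sum_k\lambda_k^s a_k^{(p+1)}=f^{(p+1+s)}(x)$, $s=0,\dots,r-1$, and the verification that the generalized Vandermonde $M=\Wb_r(\Lambda)\,\diag\left(\lambda_1^{p+1-j},\dots,\lambda_r^{p+1-j}\right)$ applied to $a^{(j)}$ reproduces $\left[f^{(p+1+s)}(x)\right]_{s=0}^{r-1}$, with invertibility of $M$ from the nonzero distinct $\lambda_k$ --- is complete and in fact supplies the detail the paper elides, and it explains conceptually why the negative-power rows appear: they encode the lower-order data $f^{(p+1)}(x),\dots,f^{(j-1)}(x)$ through the recursion, which is precisely the bookkeeping point you correctly flagged. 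What each approach buys: the paper's computation produces the explicit diagonal/block factorizations that parallel \eqref{eq:inverse_V} and whose ingredients (notably the matrix $\Wb$ from \eqref{eq:deriv2}) are reused in the error estimates of Corollary~\ref{cor:Taylor_Deriv}, whereas your interpolation-theoretic argument is shorter, avoids the factorization bookkeeping, and yields both \eqref{eq:deriv1} and \eqref{eq:deriv2} from a single uniqueness principle.
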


\begin{proof}
  For $j=0,\ldots,p$ we have
  \begin{align*}
     \lefteqn{(T_{p,\Lambda}f)^{(j)}(x,h)} \\
     &=
      \left[0,\ldots,0,j!,\ldots,\frac{p!}{(p-j)!}h^{p-j},\lambda_1^{j}e^{\lambda_1
      h}, \ldots, \lambda_r^{j}e^{\lambda_r h} 
      \right]\Vb_d^{-1}\vb_f(x) \\
    &= \left[ 0, \ldots,0, 1,\ldots, h^{p-j}, e^{\lambda_1 h},
      \ldots e^{\lambda_r h} \right] 
      \diag \left(1, \ldots,\frac{p!}{(p-j)!}, \lambda_1^{j},
      \ldots, \lambda_r^{j}\right) 
                              \Vb_d^{-1}\vb_f(x).        
  \end{align*}
  Furthermore,
  \begin{align*}
    \lefteqn{\diag \left(1, \ldots,\frac{p!}{(p-j)!}, \lambda_1^{j}, \ldots,
            \lambda_r^{j}\right) \cdot \Vb_d^{-1}} \\
          &=\left( \Vb_d \cdot \diag \left(1,
            \ldots,\frac{(p-j)!}{p!}, \lambda_1^{-j}, \ldots,
            \lambda_r^{-j}\right)\right)^{-1} \\
          &=  \left[
            \begin{array}{ccc|cccccc}
              1 & &  & & & & \lambda_1^{-j} & \dots & \lambda_r^{-j} \\
                & \ddots & &  & & & \vdots & \ddots  & \vdots \\
                & & 1 & & & & \lambda_1^{-1} & \dots & \lambda_r^{-1} \\
              \hline
                & &  & 1 &  &  & 1 & \dots & 1 \\
                & & & & \ddots & & \vdots & \ddots & \vdots \\
                & & & & & (p-j)! & \lambda_1^{p-j} & \dots & \lambda_r^{p-j} \\
                & & & & & & \vdots & \ddots & \vdots \\
                & & & & & & \lambda_1^{d-j} & \dots & \lambda_r^{d-j} \\
            \end{array}
    \right]^{-1} 
    = \left[ \begin{array}{c|c} 
               \Ib & \Wb\\ \hline
                 & \Vb_{p-j}
             \end{array}
                   \right]^{-1}.
  \end{align*}
  By (\ref{eq:inverse_V}) we obtain
  \begin{align*}
    (T_{p,\Lambda}f)^{(j)}(x,h)
    &=\left[ 0, \ldots,0, 1,\ldots,
      h^{p-j}, e^{\lambda_1 h}, \ldots e^{\lambda_r h} \right]
      \left[ \begin{array}{c|c} 
               \Ib & -\Wb \Vb_{p-j}^{-1} \\ \hline
                   & \Vb_{p-j}^{-1} \\
             \end{array}
    \right] \vb_f(x) \\
    &= \left[ 1,\ldots, h^{p-j}, e^{\lambda_1 h}, \ldots e^{\lambda_r
      h} \right] \Vb_{p-j}^{-1} \left[ f^{(k)}(x)
    \right]_{k=j}^d \\ 
    &= T_{p-j,\Lambda}f^{(j)}(x,h).
  \end{align*}
  For $j=p+1,\ldots, d$, the polynomial part vanishes
  completely. Equation \eqref{eq:deriv2} then follows from similar
  computations as the ones just carried out. 
\end{proof}

Using the preceding result, we obtain, in analogy with
Lemma~\ref{lem:TdLambdApprox}, similar error bounds between the derivatives
of the Taylor polynomial and (\ref{eq:ExpoTaylorDef}).

\begin{cor}\label{cor:Taylor_Deriv}
  Let $f \in C^d(\RR)$. For any $R<1$ and $j=0,\ldots,d$ we have
  \begin{equation*}
    |(T_{p,\Lambda}f)^{(j)}(x,h)-T_{d-j}f^{(j)}(x,h)|\leq
    C_{\Lambda,R}  \left| \vb_{f^{(k_j)}}(x) \right|_2 
    |h|^{d+1-j}, \quad |h| \leq R,
  \end{equation*}
  where $k_j= \min \{ j,p+1 \}$.
\end{cor}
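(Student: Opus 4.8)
The plan is to split the range of $j$ at $p$ and, in each regime, to reduce the claim to the error bound of Lemma~\ref{lem:TdLambdApprox}, but applied to a suitable \emph{derivative} of $f$ rather than to $f$ itself. Passing to a derivative of $f$ before estimating is exactly what produces the short vector $\vb_{f^{(k_j)}}(x)$ demanded by the statement instead of the full $\vb_f(x)$: differentiating once in the polynomial regime discards a low-order derivative of $f$, whereas in the exponential regime the relevant data freezes at $f^{(p+1)},\ldots,f^{(d)}$, which is why $k_j=\min\{j,p+1\}$. I keep $R<1$ fixed throughout; since $r$ and the frequencies $\Lambda$ are preserved by the reductions and only finitely many indices $j$ occur, all constants below can be absorbed into a single $C_{\Lambda,R}$.

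For $j=0,\ldots,p$, where $k_j=j$, I use the identity \eqref{eq:deriv1}, namely $(T_{p,\Lambda}f)^{(j)}(x,\cdot)=T_{p-j,\Lambda}f^{(j)}(x,\cdot)$. As $f\in C^d(\RR)$ gives $f^{(j)}\in C^{d-j}(\RR)$ with $d-j=(p-j)+r$, I apply Lemma~\ref{lem:TdLambdApprox}, with $p$ replaced by $p-j$, to the function $f^{(j)}$ and the same frequency set $\Lambda$, obtaining
\begin{equation*}
\left| T_{p-j,\Lambda}f^{(j)}(x,h)-T_{d-j}f^{(j)}(x,h) \right|
\le C_{\Lambda,R}\,\left|\vb_{f^{(j)}}(x)\right|_2\,|h|^{d-j+1},
\qquad |h|\le R.
\end{equation*}
Rewriting the first term through \eqref{eq:deriv1} yields the assertion in this range, with $\vb_{f^{(j)}}=\vb_{f^{(k_j)}}$.

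For $j=p+1,\ldots,d$, where $k_j=p+1$, the polynomial part of $T_{p,\Lambda}f$ is annihilated and only the exponential block survives. Using the block form \eqref{eq:inverse_V} of $\Vb_d^{-1}$, whose last $r$ rows are $[\,\bZero\,|\,\Zb^{-1}]$, I first observe that $(T_{p,\Lambda}f)^{(j)}(x,h)$ depends only on $[f^{(k)}(x)]_{k=p+1}^{d}=\vb_{f^{(p+1)}}(x)$ and in fact equals the $(j-p-1)$-st $h$-derivative of the pure exponential interpolant
\begin{equation*}
E(x,h):=\left[ e^{\lambda_1 h},\ldots,e^{\lambda_r h}\right]\Zb_0^{-1}\vb_{f^{(p+1)}}(x),
\end{equation*}
where $\Zb_0$ is the Vandermonde factor of $\Zb=\Zb_0\,\diag(\lambda_k^{p+1})$; the scalar factors $\lambda_k^{-(p+1)}$ from $\Zb^{-1}$ are precisely absorbed against the $\lambda_k^{j}$ produced by $j$-fold differentiation, leaving $\lambda_k^{\,j-p-1}$. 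I then repeat the estimate of Lemma~\ref{lem:TdLambdApprox} for this $r$-dimensional pure exponential operator: expanding each $e^{\lambda_k h}$ in its power series, the first $r$ terms reproduce $T_{r-1}f^{(p+1)}(x,h)$, because $[\lambda_k^{m}]_k\Zb_0^{-1}=\eb_m^T$ for $m=0,\ldots,r-1$, while the tail is an entire function of $h$ with lowest-order term $h^{r}$, bounded by $C_{\Lambda,R}\left|\vb_{f^{(p+1)}}(x)\right|_2|h|^{r}$ for $|h|\le R<1$ by the spectral-radius argument behind \eqref{eq:MatrixNormApprox}. Differentiating this identity $i:=j-p-1$ times term by term, which is legitimate since the tail is entire, turns the polynomial part into $T_{(r-1)-i}f^{(p+1+i)}=T_{d-j}f^{(j)}$ and lowers the tail to order $h^{r-i}=h^{d+1-j}$, giving the claim with $\vb_{f^{(p+1)}}=\vb_{f^{(k_j)}}$.

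The main obstacle is this second regime, and specifically obtaining the short vector $\vb_{f^{(k_j)}}(x)$ rather than $\vb_f(x)$. Differentiating the expansion of $T_{p,\Lambda}f$ from the proof of Lemma~\ref{lem:TdLambdApprox} term by term is easy and already delivers the correct power $|h|^{d+1-j}$, but only with the coarse factor $|\vb_f(x)|_2$; the sharpening forces one to carry out the algebraic reduction first — via \eqref{eq:deriv1} in the polynomial range and via the pure exponential identity above in the exponential range — so that the lower-order derivatives of $f$ are genuinely eliminated before any estimation. The one computation that must be done with care is verifying that the factors $\lambda_k^{-(p+1)}$ cancel correctly under $j$-fold differentiation, so that $(T_{p,\Lambda}f)^{(j)}$ is exactly the stated derivative of $E$.
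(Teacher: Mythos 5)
Your proof is correct, and its first half is exactly the paper's argument: for $j=0,\dots,p$ both you and the authors combine \eqref{eq:deriv1} with Lemma~\ref{lem:TdLambdApprox} applied to $f^{(j)}$ with $p$ replaced by $p-j$, which is what produces $\vb_{f^{(j)}}(x)$ in place of $\vb_f(x)$.

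In the exponential regime $j=p+1,\dots,d$ you take a genuinely different route, even though the analytic core is the same in both arguments (power-series expansion of the exponential block, the Vandermonde identity $\left[\lambda_k^m\right]_{k=1}^r \Wb_r(\Lambda)^{-1}=\eb_m^T$ for $m=0,\dots,r-1$ as the analogue of \eqref{eq:BasisConvert}, the spectral-radius bound \eqref{eq:MatrixNormApprox}, and the factorially dominated tail on $|h|\le R<1$). The paper invokes the $j$-dependent representation \eqref{eq:deriv2} from the preceding lemma and then reruns the Lemma~\ref{lem:TdLambdApprox}-type estimate for a generic $j$, using the $j$-dependent matrix $\Wb$ and the block-shift matrix $\Nb_\Lambda$. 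You instead extract one fixed identity at the base level $j=p+1$ directly from the block inverse \eqref{eq:inverse_V}, namely $(T_{p,\Lambda}f)^{(p+1)}(x,h)=\bigl[e^{\lambda_1 h},\dots,e^{\lambda_r h}\bigr]\,\Wb_r(\Lambda)^{-1}\vb_{f^{(p+1)}}(x)$ via $\Zb=\Wb_r(\Lambda)\,\diag\left(\lambda_k^{p+1}\right)$, prove the expansion $E=T_{r-1}f^{(p+1)}(x,\cdot)+O\left(|h|^r\right)$ once, and bootstrap to all larger $j$ by differentiating this single entire-function identity $i=j-p-1$ times term by term. Your cancellation $\lambda_k^{j}\lambda_k^{-(p+1)}=\lambda_k^{j-p-1}$ identifying $(T_{p,\Lambda}f)^{(j)}$ with $\partial_h^{j-p-1}E$ checks out, and the one step you assert rather than write out — that $i$-fold differentiation preserves the coefficient bounds $|c_m|\le C(\rho+\varepsilon)^m$ and hence gives $C_{\Lambda,R}\,|\vb_{f^{(p+1)}}(x)|_2\,|h|^{r-i}$ — is immediate, since $i\le r-1<m$ and the factorials still dominate. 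What your organization buys is that you never need the $j$-dependent block alignment, which is the delicate point in the paper: in the paper's displayed analogue of \eqref{eq:BasisConvert}, the shift matrix $\Nb_\Lambda=\diag\left(\lambda_k^{d-j+1}\right)$ only lines the blocks up with the zero-padded monomial vector when $d-j+1=r$, i.e., at $j=p+1$; for $j>p+1$ it should be $\diag\left(\lambda_k^{r}\right)$. Your proof touches only the aligned case $j=p+1$ and so sidesteps this indexing issue entirely, at the modest cost of a term-by-term differentiation argument.
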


\begin{proof} 
  For $j=0,\ldots, p$ the statement follows by combining
  \eqref{eq:ExpoTaylorError} and \eqref{eq:deriv1}. The proof of the
  case $j=p+1,\ldots,d$, does not follow directly from
  \eqref{eq:ExpoTaylorError} and \eqref{eq:deriv2}, because 
  of the special form of (\ref{eq:deriv2}). Still the proof is very
  similar to that of Lemma~\ref{lem:TdLambdApprox}. Let
  $j=p+1,\ldots,d$ and define
  \begin{equation*}
    \Wb =
    \left[
      \begin{array}{ccc}
        \lambda_1^{p+1-j} & \cdots & \lambda_r^{p+1-j}\\
        \vdots & \ddots & \vdots \\
        \lambda_1^{-1} & \cdots & \lambda_r^{-1}\\ \hline
        1 & & 1 \\
        \vdots & \ddots & \vdots \\
        \lambda_1^{d-j} & \cdots & \lambda_r^{d-j} \\
      \end{array}
    \right]\in \RR^{r\times r},
  \end{equation*}
  where the upper part of $\Wb$ is of dimension $(j-p-1) \times r$,
  and the lower part a $(d-j+1) \times r$ matrix.
  Furthermore, we introduce the $r \times r$ matrix
  \begin{equation*}
    \Nb_{\Lambda}=
    \left[
      \begin{array}{ccc}
        \lambda_1^{d-j+1} & & \\
                          & \ddots & \\
                          & & \lambda_r^{d-j+1}
      \end{array}
    \right].
  \end{equation*}
  Similar to (\ref{eq:BasisConvert}) we have
  \begin{equation*}
    \left[
      e^{\lambda_k x}
    \right]_{k=1}^r = 
    \left[ \Wb^T \,|\, \Nb_\Lambda \Wb^T \,| \, \Nb_\Lambda^2 \Wb^T \,|\, \dots
    \right] 
    \left[
      \begin{array}{c} \bZero_{j-p-1} \\ \hline \\[-0.3cm]  \left[ \dfrac{x^k}{k!}\right]_{k \in \NN}
      \end{array}
    \right].
  \end{equation*}  
  Therefore, by (\ref{eq:deriv2})
  \begin{align*}
    \lefteqn{(T_{p,\Lambda}f)^{(j)}(x,h)
    =\left(\left[ f^{(k)}(x)\right]_{k=p+1}^d\right)^T \Wb^{-T}
    \left[e^{\lambda_k h}
        \right]_{k=1}^r} \\ 
    &=\left(\left[ f^{(k)}(x)\right]_{k=p+1}^d\right)^T
        \left[ \Ib_{r} \,|\,\Wb^{-T}\Nb_\Lambda \Wb^T \,| \,
        \Wb^{-T}\Nb_\Lambda^2 \Wb^T \,|\,\dots \right]
    \left[
      \begin{array}{c} \bZero_{j-p-1} \\ \hline \\[-0.3cm]
      \left[ \dfrac{x^k}{k!}\right]_{k \in \NN}
      \end{array}
    \right].
    \\ 
    &= T_{d-j}f^{(j)}(x,h)+\left(\left[ f^{(k)}(x) 
                                    \right]_{k=p+1}^d\right)^T
    \sum_{\ell=1}^{\infty}\sum_{j=1}^r
    \Wb^{-T}\Nb_\Lambda^{\ell} \Wb^T \eb_j
    \frac{h^{d-j+(\ell-1)r+j}}{(d-j+(\ell-1)r+j)!}. 
  \end{align*}
  Using the same arguments as in (\ref{eq:MatrixNormApprox}) we have
  for $|h| \le R < 1$ that
  \begin{align*}
    | (T_{p,\Lambda}f)^{(j)}
    & (x,h)-T_{d-j}f^{(j)}(x,h) |  \\
    &  \leq \left|\vb_{f^{(p+1)}}(x)\right|_2\sum_{\ell=1}^{\infty}\sum_{j=1}^r
      \left|\Wb^{-T}\Nb_\Lambda^{\ell} \Wb^T\right|_2
      \frac{|h|^{d-j+(\ell-1)r+j}}{(d-j+(\ell-1)r+j)!} \\
    &  \leq \left|\vb_{f^{(p+1)}}(x)\right|_2 |h|^{d-j+1}
      \sum_{\ell=0}^{\infty}\sum_{j=0}^{r-1} (\rho+\epsilon)^{\ell+1}
      \frac{|h|^{\ell r+j}}{(d-j+\ell r+j+1)!}\\ 
    &  \leq \left|\vb_{f^{(p+1)}}(x)\right|_2 |h|^{d-j+1} (\rho+\epsilon)
      e^{R^r(\rho+\epsilon)} \frac{1-R^r}{1-R} .
  \end{align*}
  This gives
  \begin{equation*}
    |(T_{p,\Lambda}f)^{(j)}(x,h)-T_{d-j}f^{(j)}(x,h)| \leq
    C_{\Lambda,R} \, \left|\vb_{f^{(p+1)}}(x)\right|_2 \, |h|^{d-j+1} \\ 
  \end{equation*}
  as claimed.
\end{proof}

\noindent	
In the case of functions in $C^d_u(\RR)$, Corollary~\ref{cor:Taylor_Deriv} immediately gives a bound independent of $x$.

\begin{cor}\label{cor:Taylor_Little_o}
  For $f \in C^d_u(\RR)$, $j=0,\ldots d$ and any $R< 1$
  \begin{equation*}
    |(T_{p,\Lambda}f)^{(j)}(x,h)-T_{d-j}f^{(j)}(x,h)|\leq
    C_{\Lambda,R,f} \, |h|^{d-j+1}, \qquad |h|< R. 
  \end{equation*}
\end{cor}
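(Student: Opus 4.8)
The plan is to read off this statement as a direct specialization of Corollary~\ref{cor:Taylor_Deriv}, the only work being to replace the $x$-dependent factor $|\vb_{f^{(k_j)}}(x)|_2$ by a constant that depends on $f$ but not on $x$. The extra input supplied by the hypothesis $f \in C^d_u(\RR)$, compared with the merely $C^d$ assumption of Corollary~\ref{cor:Taylor_Deriv}, is exactly that each $f^{(i)}$, $i=0,\ldots,d$, is bounded, so that the Sobolev norm $\|f\|_{d,\infty} = \max_{i=0,\ldots,d} \|f^{(i)}\|_\infty$ is finite.

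First I would check that the vector $\vb_{f^{(k_j)}}(x)$ appearing in Corollary~\ref{cor:Taylor_Deriv} never calls for derivatives of order higher than $d$. For $j \le p$ one has $k_j = j$, and the vector produced by applying \eqref{eq:ExpoTaylorError} to $f^{(j)}$ (with $p,d$ replaced by $p-j,d-j$) has entries $f^{(j)}(x),\ldots,f^{(d)}(x)$; for $j \ge p+1$ one has $k_j = p+1$, and the relevant vector is $[f^{(k)}(x)]_{k=p+1}^d$. In both cases only derivatives up to order $d$ occur, so each such vector is uniformly bounded,
\begin{equation*}
  \left| \vb_{f^{(k_j)}}(x) \right|_2 \le \sqrt{d+1}\,\|f\|_{d,\infty},
  \qquad x \in \RR.
\end{equation*}

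Substituting this uniform bound into Corollary~\ref{cor:Taylor_Deriv} and collecting $\sqrt{d+1}$, the constant $C_{\Lambda,R}$, and $\|f\|_{d,\infty}$ into one constant $C_{\Lambda,R,f}$ then gives the asserted estimate for every $j=0,\ldots,d$ and all $|h| < R$. I do not expect any genuine obstacle: the argument is pure bookkeeping, and the single point deserving attention is the observation made above, namely that $\vb_{f^{(k_j)}}$ stays within the regularity $C^d$ guaranteed by the hypothesis and therefore can be controlled by a single $f$-dependent constant.
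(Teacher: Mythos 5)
Your proposal is correct and follows essentially the same route as the paper: invoke Corollary~\ref{cor:Taylor_Deriv} and absorb the $x$-dependent factor $\left|\vb_{f^{(k_j)}}(x)\right|_2$ into the constant via the finiteness of $\|f\|_{d,\infty}$ guaranteed by $f \in C^d_u(\RR)$, noting (as the paper's proof of Corollary~\ref{cor:Taylor_Deriv} implicitly does) that these vectors involve only derivatives up to order $d$. In fact your bound $\left|\vb_{f^{(k_j)}}(x)\right|_2 \le \sqrt{d+1}\,\|f\|_{d,\infty}$ is slightly more careful than the paper's, which asserts $\left|\vb_{f^{(j)}}(x)\right|_2 \le \left|\vb_{f^{(j)}}(x)\right|_\infty$ (the inequality actually goes the other way and needs your dimensional factor $\sqrt{d+1}$, harmlessly absorbed into $C_{\Lambda,R,f}$).
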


\begin{proof}
  Since $f \in C^d_u(\RR)$ we have for $x \in \RR$ and $j=0,\ldots,d$ that
  $$
  \left|\vb_{f^{(j)}}(x)\right|_2
  \leq \left|\vb_{f^{(j)}}(x)\right|_{\infty} = \max_{k=j,\ldots, d} |f^{(k)}(x)| 
  \leq \max_{k=0,\ldots, d} \sup_{y \in \RR} |f^{(k)}(y)| =  \|f\|_{d,\infty}.
  $$
  Therefore $\left|\vb_{f^{(j)}}(x)\right|_2$ is bounded by a finite constant
  independent of $x \in \RR$ and for $j=0,\ldots,d$.
\end{proof}

Finally we are able to determine the asymptotic behavior of the
remainder term for any function (and derivatives) approximated with
the generalized Taylor formula.

\begin{lem}\label{lem:errorGeneralizedTaylor}
  Let $f \in C^d_u(\RR)$. Then for any $R<1$ we have
  \begin{equation*}
   \left| \left[ h^j \left(f^{(j)}(x+h)-
      (T_{p,\Lambda}f)^{(j)}(x,h)\right)\right]_{j=0}^{d}
    \right|_{\infty} \leq C_{\Lambda,R,f}\,h^{d+1}, \qquad |h|<R.
  \end{equation*}
\end{lem}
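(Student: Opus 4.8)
The plan is to control the quantity componentwise by separating the genuine Taylor remainder from the discrepancy between the classical Taylor operator and its exponential variant. For each fixed $j \in \{0,\dots,d\}$, I would write the difference inside the norm as
\begin{equation*}
  h^j\left(f^{(j)}(x+h) - (T_{p,\Lambda}f)^{(j)}(x,h)\right)
  = h^j\left(f^{(j)}(x+h) - T_{d-j}f^{(j)}(x,h)\right)
  + h^j\left(T_{d-j}f^{(j)}(x,h) - (T_{p,\Lambda}f)^{(j)}(x,h)\right).
\end{equation*}
The second term is exactly what Corollary~\ref{cor:Taylor_Little_o} bounds: it is at most $C_{\Lambda,R,f}\,|h|^{d-j+1}$, so after multiplying by $|h|^j$ it contributes $C_{\Lambda,R,f}\,|h|^{d+1}$, uniformly in $x$. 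This is the step where all the exponential-vs-polynomial machinery developed earlier in the section gets used, and since the bound there is already independent of $x$, no further work is needed on this piece.

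The first term is a purely classical object, namely the Taylor remainder of $f^{(j)}$ to order $d-j$ centered at $x$. First I would invoke the standard Lagrange (or integral) form of the remainder, giving
\begin{equation*}
  f^{(j)}(x+h) - T_{d-j}f^{(j)}(x,h)
  = \frac{h^{d-j+1}}{(d-j+1)!}\,\Big(f^{(d+1)}(\xi) \text{ analogue}\Big),
\end{equation*}
but since $f$ is only assumed to be in $C^d_u(\RR)$, there is no $(d+1)$-st derivative to evaluate. The clean way around this is to use the integral form of the remainder against $f^{(d)}$ together with uniform continuity: $f^{(j)}(x+h)-T_{d-j}f^{(j)}(x,h)$ can be written so that its leading term involves $f^{(d)}$, and the remainder is $o(|h|^{d-j})$ uniformly because $f^{(d)}\in C_u(\RR)$. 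Multiplying by $|h|^j$ yields $o(|h|^d)$, which is dominated by $C_{\Lambda,R,f}\,|h|^{d+1}$ only if one is careful; the honest statement is that the classical remainder is $O(|h|^{d-j})$ with a uniform constant from $\|f\|_{d,\infty}$, so $h^j$ times it is $O(|h|^d)$.

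The main obstacle, and the point deserving the most care, is reconciling the claimed $h^{d+1}$ decay with the fact that a $C^d_u$ function only affords a classical Taylor remainder of order $h^{d-j+1}$ at best (and genuinely $o(h^{d-j})$ without more smoothness). The resolution is that the two remainder pieces must be recombined \emph{before} estimating: rather than bounding $f^{(j)}(x+h)-T_{d-j}f^{(j)}(x,h)$ in isolation, I would compare $f^{(j)}(x+h)$ directly against $(T_{p,\Lambda}f)^{(j)}(x,h)$ and exploit that the generalized Taylor operator reproduces $V_{p,\Lambda}$ exactly, so its error against $f$ inherits the full order. Concretely, the extra exponential terms in $T_{p,\Lambda}f$ are engineered precisely so that the combined approximant matches $f$ through order $d$, and Corollary~\ref{cor:Taylor_Deriv} already packages the $|h|^{d-j+1}$ bound for the derivative of the generalized operator against $T_{d-j}$; what remains is to pair this with the elementary fact that $f^{(j)}(x+h)-T_{d-j}f^{(j)}(x,h)=o(|h|^{d-j})$ uniformly and check that, after multiplication by $h^j$ and taking the maximum over $j=0,\dots,d$, every component is controlled by a single constant times $h^{d+1}$ for $|h|<R$. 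I would finish by taking the $|\cdot|_\infty$ over the $(d+1)$ components, absorbing the finitely many constants $C_{\Lambda,R,f}$ into one, which gives the stated uniform estimate.
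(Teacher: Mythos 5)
Your decomposition is exactly the paper's: split $h^j\bigl(f^{(j)}(x+h)-(T_{p,\Lambda}f)^{(j)}(x,h)\bigr)$ into the classical remainder $h^j\bigl(f^{(j)}(x+h)-(T_df)^{(j)}(x,h)\bigr)$ and the operator discrepancy $h^j\bigl((T_df)^{(j)}(x,h)-(T_{p,\Lambda}f)^{(j)}(x,h)\bigr)$, handle the latter via $(T_df)^{(j)}(x,\cdot)=T_{d-j}f^{(j)}(x,\cdot)$ and Corollary~\ref{cor:Taylor_Little_o}, and take the maximum over $j=0,\dots,d$. That part of your write-up matches the paper's proof step for step, and your treatment of the discrepancy term is complete.

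The gap is in the classical piece, and your proposed ``recombination'' does not close it. You correctly observe that $f\in C^d_u(\RR)$ only yields $f^{(j)}(x+h)-T_{d-j}f^{(j)}(x,h)=o(|h|^{d-j})$ uniformly (integral form of the remainder against $f^{(d)}$ plus uniform continuity), so after multiplying by $h^j$ you get $o(|h|^d)$, not $O(|h|^{d+1})$. Your escape route --- comparing $f^{(j)}(x+h)$ directly with $(T_{p,\Lambda}f)^{(j)}(x,h)$ in the hope that the exponential terms upgrade the order --- cannot work: by Lemma~\ref{lem:TdLambdApprox} and Corollary~\ref{cor:Taylor_Deriv} the two operators differ by $O(|h|^{d+1-j})$, so the error of $T_{p,\Lambda}$ against $f$ agrees with the classical Taylor error to precisely the order you are trying to gain; there is no cancellation available, since both operators consume the same data $\vb_f(x)$. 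A concrete obstruction: take $f$ with $f^{(d)}$ H\"older continuous of exponent $\alpha\in(0,1)$ but not Lipschitz; then the remainder is of genuine order $|h|^{d+\alpha}$, beating any bound $C|h|^{d+1}$ as $h\to 0$. To be fair, the paper's own proof does no better at this point: it simply asserts the classical bound \eqref{eq:taylor_Hd} as well-known, and that bound (hence the lemma as stated) is valid only under an extra hypothesis such as $f^{(d)}$ Lipschitz or $f\in C^{d+1}_u(\RR)$, under which your Lagrange/integral-form step goes through verbatim and your argument becomes complete. So you have in fact located the one unjustified step in the paper's proof; the error in your proposal is presenting the recombination as a repair when it repairs nothing.
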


\begin{proof}
  It is well-known that the derivatives in the usual Taylor formula
  of $f$ are given by
  \[
  \left(T_df\right)^{(j)}(x,h)=\left(T_{d-j}f^{(j)}\right)(x,h), \qquad j=0,\ldots,d,
  \]
  with the remainder terms satisfying
  \begin{equation}\label{eq:taylor_Hd}
   |h^j\left( f^{(j)} (x+h) - (T_{d}f)^{(j)}(x,h)\right)|\leq C_{\Lambda,R,f}\,h^{d+1},\quad |h|<R, \, j=0,\ldots,d.
  \end{equation}
  We then have 
  \begin{eqnarray*}
	\lefteqn{\left|\left[ h^j \left(f^{(j)}(x+h)- (T_{p,\Lambda}f)^{(j)}(x,h)\right)\right]_{j=0}^{d} \right|_{\infty}} \\
	& =&\, \max_{j=0,\ldots, d} |h^j (f^{(j)}(x+h)- (T_{p,\Lambda}f)^{(j)}(x,h)) | \\
	& \leq& \max_{j=0,\ldots, d}\left( |h^j (f^{(j)}(x+h)- (T_{d}f)^{(j)}(x,h))|\right.\\
	& &+\left.|h^{j}((T_{d}f)^{(j)}(x,h)- (T_{p,\Lambda}f)^{(j)}(x,h))|\right).
	\end{eqnarray*}
	Therefore, in virtue of (\ref{eq:taylor_Hd}) and Corollary~\ref{cor:Taylor_Little_o}, it follows that:
	\begin{equation*}
	\left|\left[ h^j (f^{(j)}(x+h)- (T_{p,\Lambda}f)^{(j)}(x,h))\right]_{j=0}^{d} \right|_{\infty}\leq C_{\Lambda,R,f}\,h^{d+1}, \qquad |h|<R. \qedhere
	\end{equation*}
\end{proof}

\section{Decay of wavelet coefficients}\label{sec:decay}
In this section we give our main result, namely we prove that the wavelet coefficients associated to an interpolatory MRA generated
by a level-dependent Hermite subdivision scheme satisfying the
$V_{p,\Lambda}$-spectral condition  decrease of a certain
well-defined order as the scale increases and we give estimates of such a decay.
To our knowledge this has never been investigated in the Hermite
setting, even in the case of only polynomial reproduction.
Our proof exploits the generalized Taylor formula associated to a function in $C^d_u(\RR)$ given in the previous section.

We start with some remarks concerning the support of the basic limit functions associated to a Hermite subdivision scheme.
Let us consider a sequence of masks $(\Ab^{[n]}: n \geq 0)$ whose
support is contained in a finite interval $[-N,N]$, $N \in \NN$, i.e.,
$\supp(\Ab^{[n]}) \subseteq [-N,N]$ for all $n \in \NN$. Moreover, the
associated Hermite subdivision scheme  
$S({\Ab}^{[n]}: n \geq 0)$ is assumed to be $C^d$-convergent. Denote by $(\Fb^{[n]}: n \geq 0)$ its sequence of basic limit functions.
Using similar arguments as in \cite[Section 2.3]{dyn02}, it is easy to see that also
	\begin{equation}\label{eq:finite_support_F}
	\supp(\Fb^{[n]}) \subseteq [-N,N], \quad n \in \NN.
	\end{equation}
This fact is essential in the proof of the main theorem:

\begin{thm}\label{T:leveldep_decay}
  Let $S({\Ab}^{[n]}: n \geq 0)$ be a $C^d$-convergent interpolatory
  Hermite subdivision scheme satisfying the $V_{p,\Lambda}$-spectral
  condition.
  Moreover assume that there exists $N \in \NN$ such that
  $\supp(\Ab^{[n]}) \subseteq [-N,N]$ for all $n \in \NN$,
  and that $\sup_{n \in \NN}\|\Fb^{[n]}\|_{\infty} < \infty$.
  For $f \in C^{d}_u(\RR)$ the associated wavelet coefficients
  $\db^{[n]}$ defined in (\ref{eq:dn}) satisfy the following property:
  For $R<1$, there exist $m \in \NN$ and a constant $C>0$, depending on
  $\Lambda,R,f,N$ and the Hermite subdivision scheme, such that
  \begin{equation*}
    \|\db^{[n]}\|_{\infty}\leq C\, 2^{-n(d+1)}, \qquad n \geq m.
  \end{equation*}
\end{thm}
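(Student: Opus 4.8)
The plan is to exploit the linearity of the analysis map $f \mapsto (\db^{[n]})$ together with the exact vanishing guaranteed by the $V_{p,\Lambda}$-spectral condition. Since $\cb^{[n]} = \Db^n \vb_f^{[n]}$ depends linearly on $f$, so does each coefficient $\db^{[n]}_\ell = (\cb^{[n+1]})_\ell - (S_{\Ab^{[n]}}\cb^{[n]})_\ell$ from \eqref{eq:dn}. Moreover, if $g \in V_{p,\Lambda}$, then Definition~\ref{def:spectralcond} gives $S_{\Ab^{[n]}}\Db^n \vb_g^{[n]} = \Db^{n+1}\vb_g^{[n+1]}$, so that $\db^{[n]}(g) = \Db^{n+1}\vb_g^{[n+1]} - S_{\Ab^{[n]}}\Db^n\vb_g^{[n]} = 0$ identically. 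The idea is therefore: for each fixed scale $n$ and index $\ell$, replace $f$ by a local model $g \in V_{p,\Lambda}$ interpolating $f$ to high order near the (finite) support of $\db^{[n]}_\ell$, so that $\db^{[n]}_\ell = \db^{[n]}_\ell(f-g)$, and then estimate the right-hand side by the generalized Taylor error of Lemma~\ref{lem:errorGeneralizedTaylor}.

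Concretely, I would fix $\ell \in \ZZ$, set $x_0 = 2^{-(n+1)}\ell$, and take $g(y) := T_{p,\Lambda}f(x_0, y - x_0)$. Since $V_{p,\Lambda}$ is invariant under translation of the argument, $g \in V_{p,\Lambda}$; and by the very construction of $\Vb_d$ (whose columns are the derivative vectors at $0$ of the chosen basis of $V_{p,\Lambda}$) one has $g^{(j)}(x_0) = f^{(j)}(x_0)$ for $j=0,\dots,d$. Writing $\db^{[n]}_\ell(f-g)$ explicitly, the contribution from $\cb^{[n+1]}$ is $\Db^{n+1}(\vb_f - \vb_g)(x_0)$, which vanishes identically because of this exact matching at $x_0$. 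Hence only
\[
\db^{[n]}_\ell = -\sum_{k \in \ZZ}\Ab^{[n]}_{\ell - 2k}\,\Db^n(\vb_f - \vb_g)(2^{-n}k)
\]
survives, and by $\supp(\Ab^{[n]})\subseteq[-N,N]$ the sum runs over the at most $N+1$ indices $k$ with $|\ell - 2k|\le N$.

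For each such $k$ I would write $2^{-n}k = x_0 + h$ with $h = 2^{-(n+1)}(2k-\ell)$, so $|h|\le 2^{-(n+1)}N$; choosing $m$ so that $2^{-(m+1)}N < R$ places $h$ in the range of Lemma~\ref{lem:errorGeneralizedTaylor} for all $n\ge m$. The $j$-th entry of $\Db^n(\vb_f-\vb_g)(2^{-n}k)$ is $2^{-nj}(f^{(j)}-g^{(j)})(x_0+h)$, and since $g^{(j)}(x_0+h) = (T_{p,\Lambda}f)^{(j)}(x_0,h)$, Lemma~\ref{lem:errorGeneralizedTaylor} bounds $|h^j(f^{(j)}-g^{(j)})(x_0+h)|$ by $C_{\Lambda,R,f}|h|^{d+1}$; dividing by $|h|^j$ and inserting $|h|\le 2^{-(n+1)}N$ yields (routinely) $|\Db^n(\vb_f-\vb_g)(2^{-n}k)|_\infty \le C'2^{-n(d+1)}$, with $C'$ independent of $\ell$ precisely because $f\in C^d_u(\RR)$. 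Finally, the mask entries are uniformly bounded: from the interpolatory case of \eqref{lem:leveln_card} with $m=1$ one reads off $\Ab^{[n]}_k = \Db\,\Fb^{[n]}(k/2)$, whence $|\Ab^{[n]}_k|_\infty \le \sup_n\|\Fb^{[n]}\|_\infty =: M < \infty$ by hypothesis. Combining these in the display gives $|\db^{[n]}_\ell|_\infty \le (N+1)MC'\,2^{-n(d+1)}$ uniformly in $\ell$, and taking the supremum over $\ell$ proves the claim with $C = (N+1)MC'$.

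The genuinely routine part is the weighted error estimate. The step requiring the most care is the bookkeeping that converts the $h^j$-weights of Lemma~\ref{lem:errorGeneralizedTaylor} into the $\Db^n$-scaling while verifying that the exact interpolation $g^{(j)}(x_0)=f^{(j)}(x_0)$ kills the $\cb^{[n+1]}$-contribution and that both constants (the Taylor constant and the mask bound $M$) stay uniform in $\ell$. The conceptual crux, however, is the reduction itself: recognizing that the spectral condition provides vanishing on the \emph{unscaled} space $V_{p,\Lambda}$, so a single translation-invariant local model $g$ suffices and no level-dependent rescaling of $\Lambda$ enters the argument.
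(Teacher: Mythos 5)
Your proposal is correct, but it takes a genuinely different route from the paper's proof. The paper writes $\db^{[n]}_\ell=\Db^{n+1}\bigl(\vb_{\cP_{n+1}f}-\vb_{\cP_nf}\bigr)(2^{-n-1}\ell)$, inserts the generalized Taylor approximant $g$ between the two terms, and estimates the resulting piece $I_2=\bigl|\Db^{n+1}(\vb_g-\vb_{\cP_nf})(2^{-n-1}\ell)\bigr|_\infty$ through the projection representation \eqref{eq:projF}: it invokes Lemma~\ref{le:vp} to replace $\vb_g$ by $\vb_{\cP_n g}$ and then needs both $\supp(\Fb^{[n]})\subseteq[-N,N]$ (i.e.\ \eqref{eq:finite_support_F}) and $\sup_n\|\Fb^{[n]}\|_\infty<\infty$ to control the sum over translates of the basic limit functions. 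You instead exploit linearity of the analysis map together with the exact annihilation $\db^{[n]}(g)=0$, which follows from Definition~\ref{def:spectralcond} applied at the single level $n$; this bypasses $\cP_n$, Lemma~\ref{le:vp} and the support of $\Fb^{[n]}$ entirely, reducing the estimate to one finite mask sum of length at most $N+1$. Your two supporting observations are sound: $g^{(j)}(x_0)=f^{(j)}(x_0)$, $j=0,\dots,d$, holds because the columns of $\Vb_d$ are precisely the derivative vectors at $0$ of the chosen basis (the $(d+1)$-fold Hermite interpolation conditions), so the fine-scale term drops out --- and this also disposes of the $h=0$ summand, where dividing the bound of Lemma~\ref{lem:errorGeneralizedTaylor} by $|h|^j$ would otherwise be illegitimate; and the mask bound $|\Ab^{[n]}_k|_\infty=|\Db\,\Fb^{[n]}(k/2)|_\infty\le\sup_n\|\Fb^{[n]}\|_\infty$, read off from \eqref{lem:leveln_card} with $m=1$, is exactly where the uniform-boundedness hypothesis enters your argument (your constant also absorbs a factor $N^{d+1-j}\le\max(1,N)^{d+1}$ from $|h|\le 2^{-n-1}N$, consistent with the allowed dependence on $N$). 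As to what each approach buys: yours is more elementary and localizes slightly better --- it shows $\db^{[n]}_\ell$ depends on $\vb_f$ only on $2^{-n-1}\ell+2^{-n-1}[-N,N]$, a window half the size of the one $x+2^{-n}\left[-N-\frac12,N+\frac12\right]$ obtained in Corollary~\ref{C:LocalDecay} --- whereas the paper's detour through $\cP_n$ and $\cQ_n$ keeps the whole estimate inside the MRA framework, where \eqref{eq:wave_coeff_with_F} and \eqref{eq:vector_QP} give the coefficients intrinsic meaning. Your closing remark that no rescaled frequencies $2^{-n}\Lambda$ are needed is also consistent with the paper: its proof likewise works with $T_{p,\Lambda}$ only, and Corollary~\ref{F:TdLambdApprox2n} plays no role in the decay theorem.
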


\begin{proof}
  Due to \eqref{eq:cardinality} and \eqref{eq:projF}, we have that
  \begin{equation}\label{eq:generalf}
    \vb_{{{\cal P}}_n f}(2^{-n}\ell)=\vb_f(2^{-n}\ell) \qquad \ell\in\ZZ, n \in \NN,
  \end{equation}
  holds true whenever $f \in C^d_u(\RR)$; note that
  Lemma~\ref{le:vp} cannot be applied here as it is only valid for
  functions in $V_{p,\Lambda}$.
  The representations \eqref{eq:wave_coeff_with_F} and
  \eqref{eq:vector_QP} allow us to express the wavelet coefficients in
  the following way 
  \begin{equation*}
    \db^{[n]}_{\ell}
    = \Db^{n+1}\,\vb_{{\cal Q}_n f}(2^{-n-1}\ell)=
    \Db^{n+1}\left(\vb_{{{\cal P}}_{n+1} f}(2^{-n-1}\ell)-\vb_{{{\cal P}}_n f}(2^{-n-1}\ell)\right),
  \end{equation*}
  and, by (\ref{eq:generalf}),
  \begin{equation*}
    \db^{[n]}_{\ell}= \Db^{n+1}\vb_{
      f}(2^{-n-1}\ell)-\Db^{n+1}\,\vb_{{{\cal P}}_n f}(2^{-n-1}\ell),
    \qquad \ell \in \ZZ.
  \end{equation*}
  Define $g\in V_{p,\Lambda}$ by $g(x):=T_{p,\Lambda}f(x - 2^{-n-1},2^{-n-1})$, $x \in \RR$.
  Then we have
  \begin{align*}
    \left|\db^{[n]}_{\ell}\right|_{\infty}
    = & \; \left|\Db^{n+1}\,\vb_{ f}(2^{-n-1}\ell)-\Db^{n+1}\,\vb_{{{\cal P}}_n f}(2^{-n-1}\ell)\right|_\infty\\
    \le &\;\left|\Db^{n+1}\vb_{ f}(2^{-n-1}\ell)-\Db^{n+1}\vb_g(2^{-n-1}\ell)\right|_\infty \\
      & +\left|\Db^{n+1}\vb_g(2^{-n-1}\ell)-\Db^{n+1}\vb_{{{\cal P}}_n f}(2^{-n-1}\ell)\right|_\infty\\
    =:&\; I_1+I_2.
  \end{align*}
  We start by estimating $I_1$. With $m \ge 1-\log_2 R$ we have
  $2^{-n-1}<R$ for $n\geq m$ and 
  Lemma~\ref{lem:errorGeneralizedTaylor} with $h=2^{-n-1}$ and $x=2^{-n-1}(\ell-1)$ gives
  \begin{align}\nonumber
    I_1 & =  \left|\Db^{n+1}\vb_{ f}(2^{-n-1}\ell)-\Db^{n+1}\vb_g(2^{-n-1}\ell)\right|_\infty \\ \nonumber
        & = \left| \left[
          2^{-(n+1)j} \left( f^{(j)}(2^{-n-1}\ell)-(T_{p,\Lambda}f)^{(j)}
          (2^{-n-1}(\ell-1),2^{-n-1}) 
          \right) \right]_{j=0}^d \right|_{\infty} \\ \label{eq:bound_I1}
        & \leq C_{\Lambda,R,f}\, 2^{-(n+1)(d+1)},  \qquad \ell \in \ZZ,
  \end{align}
  for $n \geq m$.
  The bound \eqref{eq:bound_I1} is even independent of $\ell \in \ZZ$
  since $f$ is uniformly continuous.
	
  It remains to estimate $I_2$.
  Due to Lemma~\ref{le:vp}, $\vb_g(2^{-n-1}\ell)=\vb_{{{\cal P}}_n
    g}(2^{-n-1}\ell)$ holds for $n\in \NN, \ell \in \ZZ$. By
  \eqref{eq:finite_support_F}
  there exists $N \in \NN$ such that 
  $\supp(\Fb^{[n]}) \subseteq [-N,N]$, $n \in \NN.$ Therefore,
  $\Fb^{[n]}(2^{-1}\ell-k) \neq 0$ if and only if $k \in J_{\ell} :=
  \left( \ell/2 + [-N,N] \right) \cap \ZZ$. 
  Using (\ref{eq:projF}), we get
  \begin{align} \nonumber
    I_2 &=\left|\Db \sum_{k\in\ZZ} \Fb^{[n]}(2^{-1}\ell-k)\Db^n
          \left(\vb_{g}(2^{-n}k)-\vb_{f}(2^{-n}k)\right)\right|_\infty\\ \nonumber
	&=\left|\Db \sum_{k\in J_{\ell}} \Fb^{[n]}(2^{-1}\ell-k)\Db^n
          \left(\vb_{g}(2^{-n}k)-\vb_{f}(2^{-n}k)\right)\right|_\infty\\ \nonumber
	&\leq \left|\Db\right|_{\infty} \sum_{k\in J_{\ell}} \left| \Fb^{[n]}(2^{-1}\ell-k)\right|_{\infty}
          \left|\Db^n
          \left(\vb_{g}(2^{-n}k)-\vb_{f}(2^{-n}k)\right)\right|_\infty\\ \nonumber
	&\leq \left|\Db\right|_{\infty} \left(\sum_{k\in J_{\ell}}
          \left| \Fb^{[n]}(2^{-1}\ell-k)\right|_{\infty}\right)\left(\sup_{k\in J_{\ell}} \left|\Db^n
          \left(\vb_{g}(2^{-n}k)-\vb_{f}(2^{-n}k)\right)\right|_\infty\right)
    \\ \label{eq:estimate_F}
        & \le \left|\Db\right|_{\infty} \# J_\ell
          \left\| \Fb^{[n]} \right\|_{\infty} \,
          \left(\sup_{k\in J_{\ell}} \left|\Db^n
          \left(\vb_{g}(2^{-n}k)-\vb_{f}(2^{-n}k)\right)\right|_\infty\right).
  \end{align}
  The estimate is now completely similarly to $I_1$, using the
  generalized Taylor expansion: For $R<1$ there exists $m \in \NN$
  such that for $n \geq m$ we have
  \begin{equation*}
    \sup_{k\in J_{\ell}} \left|\Db^n \left(\vb_{g}(2^{-n}k)-\vb_{f}(2^{-n}k)\right)\right|_\infty \leq C_{\Lambda,R,f} \, 2^{-n(d+1)}.
  \end{equation*}
  Therefore, by continuing from (\ref{eq:estimate_F}), we obtain:
  \begin{align}\label{eq:I2}
    I_2 \leq \left|\Db\right|_{\infty}
    (2N+1)\|\Fb^{[n]}\|_{\infty}C_{\Lambda,R,f} \, 2^{-n(d+1)}, \qquad
    n \in \NN,
  \end{align}
  and since the norm of $\Fb^{[n]}$ is bounded uniformly in $n$ we combine
  \eqref{eq:bound_I1} and \eqref{eq:I2} to obtain that for $R<1$,
  there exist $m \in \NN$ and a constant $C>0$, 
  depending on $\Lambda,R,f,N$ and the Hermite subdivision scheme, such that
  \begin{equation*}
    \left|\db^{[n]}_{\ell}\right|_{\infty}\le I_1 + I_2 \leq C\,
    2^{-n(d+1)}, \qquad \ell \in \ZZ, \, n \ge 0.
  \end{equation*}
  Since $C$ is independent of $\ell\in\ZZ$, this concludes the proof.
\end{proof}

\begin{Rem}
  The assumption $\sup_n \| \Fb^{[n]} \|_\infty < \infty$ of uniform
  boundedness of the limit functions is crucial and not easy to verify
  in general. However, like in Remark~\ref{R:AsymptHermite} it is
  valid again for asymptotically equivalent 
  schemes, see once more \cite{DynLevin95}. In particular it holds true for
  $V_{p,\Lambda}$--preserving interpolatory schemes mentioned in
  Remark~\ref{R:AsymptHermite}. Also note that 
  uniform boundedness of the supports of the $\Ab^{[n]}$ is needed for
  describing convergence like in \cite{conti17a}.
\end{Rem}

\noindent
By a careful inspection of the proof of Theorem~\ref{T:leveldep_decay}, we
can derive the following improved version of this result which shows
that the decay rate of the wavelet coefficients measures the
\emph{local} regularity of the function as it is typical for compactly
supported wavelets.

\begin{cor}\label{C:LocalDecay}
  With the assumptions as in Theorem~\ref{T:leveldep_decay}, the wavelet
  coefficients $\db^{[n]}_{\lceil 2^{n+1} x \rceil}$, $x\in \RR$,
  depend on $\vb_f (y)$, $y \in x + 2^{-n}
  \left[-N-\frac12,N+\frac12\right]$. 
\end{cor}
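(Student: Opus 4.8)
The plan is to trace through the proof of Theorem~\ref{T:leveldep_decay} and identify precisely which values of $\vb_f$ enter the computation of a single wavelet coefficient $\db^{[n]}_{\lceil 2^{n+1}x\rceil}$. Recall that the wavelet coefficient at index $\ell$ was decomposed as $\left|\db^{[n]}_\ell\right|_\infty \le I_1 + I_2$, where $I_1$ compares $\vb_f$ to the generalized Taylor approximant $\vb_g$ at the single point $2^{-n-1}\ell$, and $I_2$ compares $\vb_g$ to $\vb_{\mathcal{P}_n f}$ over the index set $J_\ell = \left(\ell/2 + [-N,N]\right)\cap\ZZ$. So first I would make explicit the dependence of each term on sample values of $f$.

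For the term $I_1$, the estimate used Lemma~\ref{lem:errorGeneralizedTaylor} with $h = 2^{-n-1}$ and $x = 2^{-n-1}(\ell-1)$, so the relevant arguments of $f$ and its derivatives are $2^{-n-1}(\ell-1)$ (the expansion point, through $\vb_f$ appearing in $T_{p,\Lambda}f$) and $2^{-n-1}\ell$ (the evaluation point $x+h$). For $I_2$, the function $g$ is defined via $g(x) = T_{p,\Lambda}f(x-2^{-n-1},2^{-n-1})$, whose defining data is again $\vb_f(2^{-n-1}(\ell-1))$, and $\vb_f$ is sampled at the points $2^{-n}k$ for $k\in J_\ell$. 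Setting $\ell = \lceil 2^{n+1}x\rceil$, I would convert each of these sample abscissae into the scale $2^{-n}$ and check that they all lie in the interval $x + 2^{-n}[-N-\tfrac12, N+\tfrac12]$. Since $\ell = \lceil 2^{n+1}x\rceil$ means $2^{-n-1}\ell \in [x, x+2^{-n-1})$, the points $2^{-n-1}\ell$ and $2^{-n-1}(\ell-1)$ sit within $2^{-n-1}$ of $x$; and for $k\in J_\ell$ one has $|2^{-n}k - 2^{-n-1}\ell| \le 2^{-n}N$, so $2^{-n}k$ lies within $2^{-n}(N+\tfrac12)$ of $x$. Collecting these bounds gives the claimed localization interval $x + 2^{-n}[-N-\tfrac12, N+\tfrac12]$.

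The main obstacle, which is really just a bookkeeping subtlety rather than a genuine difficulty, is the support restriction $J_\ell$: I must confirm that the only sample points of $f$ entering $\vb_{\mathcal{P}_n f}(2^{-n-1}\ell)$ are indexed by $J_\ell$, which follows from \eqref{eq:finite_support_F} exactly as in the proof of the theorem, where $\Fb^{[n]}(2^{-1}\ell - k)\neq 0$ forces $k\in J_\ell$. One must also remember that $\vb_f$ enters not only through evaluation points but also through the derivative vector appearing inside $T_{p,\Lambda}f$ and $g$; but these are tied to the single expansion point $2^{-n-1}(\ell-1)$, already covered above. Because $g$ itself is entirely determined by $\vb_f(2^{-n-1}(\ell-1))$, no additional points of $f$ are introduced through $g$ beyond those already accounted for. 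Assembling the extreme cases of the abscissae then yields that all dependence is confined to $\vb_f(y)$ with $y \in x + 2^{-n}\left[-N-\tfrac12, N+\tfrac12\right]$, which is the assertion of the corollary.
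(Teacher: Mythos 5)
Your proposal is correct and follows essentially the same route as the paper: trace the decomposition $I_1+I_2$ from the proof of Theorem~\ref{T:leveldep_decay}, use \eqref{eq:finite_support_F} to restrict the sum to $k\in J_\ell$, and bound the abscissae $2^{-n-1}\ell$, $2^{-n-1}(\ell-1)$ and $2^{-n}k$ around $x$. The only (harmless) difference is the accounting for $g$: you take $g$ to be determined solely by the expansion point $2^{-n-1}(\ell-1)$, whereas the paper charges the terms $\vb_g\left(2^{-n}k\right)$, $k\in J_\ell$, with values from $x+2^{-n}\left[-N-\frac12,N\right]$; your (smaller) dependence set is contained in the claimed interval, so the conclusion is unaffected.
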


\begin{proof}
  Let $\ell := \left\lceil 2^{n+1} x \right\rceil$, hence $\ell \in
  \left[ 2^{n+1} x, 2^{n+1} x + 1 \right]$. The estimate of $I_1$ in
  \eqref{eq:bound_I1} depends on the values of $f$ and its derivatives
  at $2^{-n-1} \ell \in \left[ x,x+2^{-n-1} \right]$ and $2^{-n-1}
  (\ell-1) \in \left[ x-2^{-n-1},x \right]$, hence, in total on the
  behavior of $f$ on $x + 2^{-n-1} [-1,1]$. On the other hand, the set
  $J_\ell$ in the estimate of \eqref{eq:I2} of $I_2$ satisfies
  $J_\ell \subset \ell/2 + [-N,N]$, hence
  $$
  2^{-n} J_\ell \subset 2^{-n-1} \left\lceil 2^{n+1} x \right\rceil +
  2^{-n} [-N,N] \subseteq x + 2^{-n} \left[ -N,N+\frac12 \right].
  $$
  Thus, the term $\vb_f \left( 2^{-n} k \right)$, $k \in J_\ell$,
  involves only values from that interval while $\vb_g \left( 2^{-n} k
  \right)$, $k \in J_\ell$, uses $x + 2^{-n} \left[ -N-\frac12,N \right]$.
\end{proof}

\noindent
Corollary~\ref{C:LocalDecay} is the justification to use
multiwavelets for edge detection. If the sampled vector data leads to
wavelet coefficients that do not decay like $2^{-n(d+1)}$, then the
underlying function cannot be $C^d$ at a position specified by the
location of the slowly decaying wavelet coefficients. The higher the
level of the wavelet coefficients is, the better the localization of
the singularity, reproducing a well-known wavelet effect,
cf. \cite{mallat09:_wavel_tour_signal_proces}.

\section*{Conclusion}\label{sec:conclusion}
In this paper we presented results on level-dependent Hermite
subdivision schemes  preserving polynomial and exponential data,
focusing on the interpolatory case, which allows to naturally obtain
multiwavelet systems via the prediction-correction approach.  
Such wavelets possess a generalized  
vanishing moment property with respect to elements in the space
spanned by exponentials and polynomials. Vanishing moments can be crucial for data
compression purposes, in particular when such systems are applied to
data exhibiting transcendental features. 
In addition, a result concerning the decay of the wavelet coefficients
corresponding to any $f\in C^d_u(\RR)$ is proved, yielding an
analogous extension of the classical result in the standard wavelet
theory. To the best of our knowledge, this result  has never been
presented  even in the case of standard (non level-dependent) Hermite
multiwavelets. In order to prove it, a generalized Taylor formula in
the space $V_{d,\Lambda}$ is introduced, which may be of
independent interest, and error bounds on the deviation from the
classical Taylor polynomial approximation are given.
Future research includes the extension of our  results to the case of
manifold-valued data.

\section*{Acknowledgments}
This research was partially supported by the DFG Collaborative
Research Center TRR 109, ``Discretization in Geometry and Dynamics''.
Most of this research was done while the second author was with the University of Passau. The second author also thanks 
the Department of Chemical and Biological Engineering, Princeton
University, for their hospitality.
The fourth author was partially supported by the Emmy Noether
Research Group KR 4512/1-1.

\bibliographystyle{amsplain}
\bibliography{references}
\end{document}